\newcounter{dummy}
\numberwithin{dummy}{section}
\newtheorem{thm}[dummy]{Theorem}
\newtheorem{lem}[dummy]{Lemma}
\newtheorem{prop}[dummy]{Proposition}
\newtheorem{cor}[dummy]{Corollary}
\theoremstyle{definition}
\newtheorem{rmk}[dummy]{Remark}
\numberwithin{equation}{section}
\newcommand{\noop}[1]{}
\DeclareMathOperator{\End}{End}
\title[Algebraic relations that preserve endomorphism rings of CM $j$-invariants]{Algebraic relations over finite fields that preserve the endomorphism rings of CM $j$-invariants}
\author[F. Campagna]{Francesco Campagna}
\address[F. Campagna]{Université Clermont Auvergne - LMBP UMR 6620 - CNRS, Campus des Cézeaux 3, place Vasarely 63178 Aubière cedex, France}
\email{francesco.campagna@uca.fr}
\author[G. A. Dill]{Gabriel A. Dill}
\address[G. A. Dill]{Rheinische Friedrich-Wilhelms-Universit\"at Bonn, Mathematisches Institut, Endenicher Allee 60, 53115 Bonn, Germany}
\email{dill@math.uni-bonn.de}
\date{\today}
\begin{document}

\renewcommand{\contentsname}{Capitvla}
\renewcommand{\refname}{\textsc{Bibliographia}}
\renewcommand{\abstractname}{Epitome}

\subjclass[2020]{11G15, 11G18}
\keywords{Andr\'e-Oort conjecture, complex multiplication, elliptic curve, Galois representation, positive characteristic, support problem}

\maketitle
\begin{abstract}
    We characterise the integral affine plane curves over a finite field $k$ with the property that all but finitely many of their $\overline{k}$-points have coordinates that are $j$-invariants of elliptic curves with isomorphic endomorphism rings. This settles a finite field variant of the André-Oort conjecture for $Y(1)^2_\mathbb{C}$, which is a theorem of André. We use our result to solve the modular support problem for function fields of positive characteristic.
\end{abstract}

\tableofcontents

\section{\textsc{Introdvctio}}

If an integral affine plane curve $\mathcal{C}$ defined over a number field $K$ contains infinitely many points whose coordinates are roots of unity, what can one say about $\mathcal{C}$? In particular, is $\mathcal{C}$ necessarily the Zariski closure in $\mathbb{A}_K^2$ of an irreducible component of an algebraic subgroup of $\mathbb{G}^2_{m,K}$, the square of the multiplicative group? This question of Lang was answered in the affirmative by Ihara, Serre, and Tate \cite{Lang_1965}. Similarly, as a first step towards the André-Oort conjecture, of which a proof has in the meantime been announced \cite{Pila_Tsimerman_2014,Daw_Orr_2016,Klingler_Ullmo_Yafaev_2016,Tsimerman_2018,Binyamini_Schmidt_Yafaev_2023,Pila_Shankar_Tsimerman_2022}, André \cite{Andre_1998} settled a modular analogue of this question where now roots of unity are replaced by $j$-invariants of elliptic curves with complex multiplication (CM). In this case, the curve $\mathcal{C}$ must either be defined by a modular polynomial parametrising pairs of elliptic curves related by a cyclic isogeny of some fixed degree or be a vertical or horizontal line.

When one replaces the number field $K$ with a finite field $\mathbb{F}_q$ of characteristic $p \in \mathbb{N}= \{1,2,\hdots\}$, the questions above become trivial. Indeed, with finitely many exceptions, all elements of a fixed algebraic closure $F$ of $\mathbb{F}_q$ are of finite multiplicative order as well as $j$-invariants of CM elliptic curves. Thus, all but finitely many $F$-points of any integral affine plane curve $\mathcal{C}_{/\mathbb{F}_q}$ that is not a vertical or horizontal line will have coordinates that are both roots of unity and CM $j$-invariants. But what happens if all but finitely many points in $\mathcal{C}(F)$ have coordinates that are both roots of unity \textit{of the same order}? Or, in the modular setting, what if all but finitely many points in $\mathcal{C}(F)$ have coordinates that are $j$-invariants of elliptic curves with complex multiplication \textit{by the same imaginary quadratic order}? While an answer to the first question can be deduced from a result of Scanlon and Voloch \cite{Scanlon_Voloch} (see Section \ref{sec:multiplicative case}), an answer to its modular counterpart does not seem to follow directly from the literature. Such an answer constitutes the main result of this paper.

\begin{thm} \label{thm:main_thm}
    Let $p \in \mathbb{N}$ be a prime and let $F$ denote an algebraic closure of the finite field $\mathbb{F}_p$. Let $\mathcal{C} \subseteq \mathbb{A}^2_F$ be an integral closed curve satisfying the following property: for all but finitely many points $(x_1,x_2) \in \mathcal{C}(F)$ we have that $x_1$ and $x_2$ are $j$-invariants of elliptic curves with isomorphic endomorphism rings. Then the coordinate functions $X,Y$ on $\mathcal{C}$ satisfy either $X=Y^{p^n}$ or $Y=X^{p^n}$ for some $n \in \mathbb{Z}_{\geq 0}$.
\end{thm}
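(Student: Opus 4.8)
The plan is to combine the abundance of these ``special'' points with the theory of complex multiplication over $F$, reduce the problem to a statement about isogenies over the function field of $\mathcal{C}$, and conclude with the Kronecker congruence. First, some reductions. Being cut out in $\mathbb{A}^2_F$ by a single polynomial, which has finitely many coefficients in $F=\overline{\mathbb{F}}_p=\bigcup_n\mathbb{F}_{p^n}$, the curve $\mathcal{C}$ is automatically defined over a finite field $\mathbb{F}_q\subseteq F$. If $\mathcal{C}$ were a vertical or horizontal line, one fixed $j$-invariant would have endomorphism ring isomorphic to that of infinitely many pairwise non-isomorphic elliptic curves, which is absurd: a given imaginary quadratic order is the endomorphism ring of only finitely many elliptic curves over $F$ up to isomorphism, and there are only finitely many supersingular curves. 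Hence both coordinate projections $\pi_1,\pi_2\colon\mathcal{C}\to\mathbb{A}^1$ are dominant with finite fibres; let $d_i$ be the generic fibre size of $\pi_i$. Since there are finitely many supersingular $j$-invariants, all but finitely many of the relevant points $(x_1,x_2)$ have $x_1,x_2$ ordinary with a common endomorphism order $\mathcal{O}$; as each such order accounts for $O_{\mathcal{C}}(h(\mathcal{O}))$ of our points while $h(\mathcal{O})\to\infty$ with $|\mathrm{disc}\,\mathcal{O}|$, infinitely many distinct orders $\mathcal{O}$ occur.

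Fix an occurring order $\mathcal{O}$, so $p$ splits in the fraction field of $\mathcal{O}$ and $p\nmid\mathrm{cond}(\mathcal{O})$, and let $\mathfrak{p}\mid p$. Write $S_{\mathcal{O}}\subseteq\mathbb{A}^1(F)$ for the set of $j$-invariants of ordinary elliptic curves over $F$ whose endomorphism ring is $\mathcal{O}$. By the theory of canonical (Deuring) lifts, $S_{\mathcal{O}}$ is a torsor under $\operatorname{Cl}(\mathcal{O})$ with $\#S_{\mathcal{O}}=h(\mathcal{O})$, on which the $p$-power Frobenius acts as translation by the class $[\mathfrak{p}]$. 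Thus a point $(x_1,x_2)\in\mathcal{C}(F)$ with $x_1,x_2\in S_{\mathcal{O}}$ satisfies $x_2=[\mathfrak{a}]\ast x_1$ for a unique $[\mathfrak{a}]\in\operatorname{Cl}(\mathcal{O})$, and then $\Phi_M(x_1,x_2)=0$, where $\Phi_M$ is the modular polynomial of level $M$ and $M$ is the least norm of an ideal in $[\mathfrak{a}]$ prime to $p\,\mathrm{cond}(\mathcal{O})$. Exploiting the hypothesis in its sharp ``all but finitely many'' form, one finds that for all but finitely many $\mathcal{O}$ and every $x_1\in S_{\mathcal{O}}$ the whole fibre $\pi_1^{-1}(x_1)$ is contained in $S_{\mathcal{O}}\times S_{\mathcal{O}}$; combined with the analogous statement for $\pi_2$, this gives $\pi_1^{-1}(S_{\mathcal{O}})=\pi_2^{-1}(S_{\mathcal{O}})=\mathcal{C}(F)\cap(S_{\mathcal{O}}\times S_{\mathcal{O}})$, a set of exactly $d_1 h(\mathcal{O})=d_2 h(\mathcal{O})$ points, so in particular $d_1=d_2$.

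Suppose first that the integers $M$ above stay bounded as $(x_1,x_2)$ ranges over the special points. Then a single level $M_0$ occurs for infinitely many of them, so infinitely many special points lie on the plane curve $\{\Phi_{M_0}=0\}\subseteq\mathbb{A}^2_F$; being Zariski dense in the integral curve $\mathcal{C}$, they force $\mathcal{C}$ to be one of its irreducible components. If some prime $\ell\neq p$ divides $M_0$, pick $\ell$ inert in one of the positive density of orders $\mathcal{O}$ with that property: every cyclic $\ell$-isogeny out of a curve with endomorphism ring $\mathcal{O}$ is then ``descending'' in the $\ell$-isogeny volcano, producing infinitely many points of $\{\Phi_{M_0}=0\}$ whose two coordinates have non-isomorphic endomorphism rings, contradicting the hypothesis. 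Hence $M_0$ is a power of $p$; the Kronecker congruence $\Phi_p\equiv(X^p-Y)(X-Y^p)\pmod p$ and its iterates identify the geometric irreducible components of $\{\Phi_{p^a}=0\}$ with the curves $\{X^{p^b}=Y^{p^c}\}$ for $b+c=a$, each of which is of the form $X=Y^{p^n}$ or $Y=X^{p^n}$; this is the desired conclusion in the bounded case.

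It remains to handle the case in which the isogeny degrees $M$ are unbounded, i.e.\ in which the linking classes $[\mathfrak{a}]$ are ``far'' from powers of $[\mathfrak{p}]$. This is the crux of the argument and the positive-characteristic analogue of what, in André's theorem, forces a plane curve with infinitely many CM points to be modular. I see two routes. One is to prove directly that the elliptic curves $\mathcal{E}_1,\mathcal{E}_2$ over the function field $F(\mathcal{C})$ with $j$-invariants the two coordinate functions are geometrically isogenous — a function-field ``support problem'' input, which one expects to extract from the $\ell$-adic Galois representations of $\mathcal{E}_1,\mathcal{E}_2$ via Chebotarev — since this gives $\Phi_M(X,Y)=0$ in $F(\mathcal{C})$ for some $M$, hence $\mathcal{C}\subseteq\{\Phi_M=0\}$, returning us to the bounded case. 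The other is quantitative: the rigidity above shows $\mathcal{C}$ meets $\{\Phi_M=0\}$ in $\gg_{\mathcal{C}}h(\mathcal{O})=|\mathrm{disc}\,\mathcal{O}|^{1/2+o(1)}$ points, forcing $M$ to be genuinely large, and one then pits this against the size $p^{\ord([\mathfrak{p}])}$ of the field of definition of $x_1$, against the Frobenius action on the various $S_{\mathcal{O}}$ (comparing $\mathcal{C}$ with its Frobenius twists $\mathcal{C}^{(p)},\dots,\mathcal{C}^{(p^{N-1})}$ with $q=p^N$, all of which satisfy the hypothesis), and against equidistribution of Galois orbits of CM points, to reach a contradiction. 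I expect this last case to be by far the main obstacle: making either route unconditional — in particular producing error terms that beat $|\mathrm{disc}\,\mathcal{O}|^{1/2+o(1)}$ uniformly in $\mathcal{O}$ — is where the real difficulty lies.
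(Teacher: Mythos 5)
Your proposal correctly identifies the overall architecture of the problem and your ``bounded case'' endgame is essentially the paper's: once one knows that $\mathcal{C}$ is forced to lie on a fixed modular relation, an inert prime $\ell \neq p$ dividing the cyclic isogeny degree is incompatible (via the structure of the $\ell$-isogeny volcano, i.e.\ the class group action) with both coordinates having the same CM order, so the relation must be purely inseparable and one concludes via the Kronecker congruence. However, the proposal has a genuine gap, and you name it yourself: the entire unbounded case, i.e.\ the step that produces a single global algebraic (isogeny) relation out of the pointwise data, is left as an acknowledged open problem (``making either route unconditional \dots is where the real difficulty lies''). Without that step nothing forces the levels $M$ to be bounded, and the argument does not close.

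For the record, your first suggested route is exactly what the paper does, and it requires no new hard input --- only a careful assembly of known theorems, not the quantitative estimates ``beating $|\mathrm{disc}\,\mathcal{O}|^{1/2+o(1)}$'' that you fear. Take elliptic curves $E_1, E_2$ with $j$-invariants $X, Y$ over a finite extension $K$ of $\mathbb{F}_q(\mathcal{C}_0)$ trivialising the $\ell$-torsion. For almost all places $\mathfrak{p}$ of $K$ the reductions have isomorphic endomorphism rings, hence are geometrically isogenous, hence (by Silverberg, using the full $\ell$-torsion) isogenous over the residue field; Tate's isogeny theorem over finite fields then gives $\mathrm{Tr}\,\rho_1(\mathrm{Frob}_{\mathfrak{P}})=\mathrm{Tr}\,\rho_2(\mathrm{Frob}_{\mathfrak{P}})$ for the two $\ell$-adic representations (using that reduction is injective on prime-to-$p$ torsion); Chebotarev for function fields and continuity give equality of traces on all of $\mathrm{Gal}(K^{\mathrm{sep}}/K)$; semisimplicity plus the trace identity give $\rho_1\simeq\rho_2$; and the isogeny theorem for elliptic curves over function fields of positive characteristic (Zarhin/Moret-Bailly) converts this into a geometric isogeny $E_1\to E_2$. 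One then factors a minimal such isogeny as a power of Frobenius composed with a separable cyclic isogeny of degree $d$ prime to $p$, and specialises this isogeny (an elementary but necessary lemma on spreading out and specialising prime-to-$p$ cyclic isogenies, which your sketch also omits) to CM points with $\ell\mid d$ inert, landing in your volcano contradiction and forcing $d=1$. So the plan is salvageable, but as written the crucial implication ``special points everywhere $\Rightarrow$ global isogeny'' is asserted, not proved.
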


The attentive reader has certainly noticed that the above theorem is not completely analogous to the theorem of André in characteristic 0. To obtain a better parallel, one would have to replace the expression ``for all but finitely many" in the statement with ``for infinitely many". However, using the $ABC$ theorem for function fields, one can show that the conclusion of Theorem \ref{thm:main_thm} does not hold anymore under this weaker hypothesis. In fact, any integral affine plane curve over $F$ that is neither a vertical nor a horizontal line contains infinitely many points whose coordinates are $j$-invariants of elliptic curves with isomorphic endomorphism rings. The same is true for points whose coordinates both have the same multiplicative order. This is discussed in more detail in Section \ref{sec:final section}. 

In order to prove Theorem \ref{thm:main_thm}, we first show that any two elliptic curves $E_1$ and $E_2$ over the function field of $\mathcal{C}$ with $j$-invariants $X$ and $Y$ respectively are geometrically isogenous to each other. We then prove that this isogeny can be chosen to be a power of Frobenius.

In the beginning, our investigation of these matters was motivated by the pursuit of a solution to the so-called \textit{modular support problem} for function fields of positive characteristic. Let us explain.

In 1988, while attending a conference in Banff, Erd\H{o}s asked whether it is true that, given two positive integers $a,b$ with the property that for every $n\in \mathbb{N}$ the set of primes dividing $a^n-1$ is equal to the set of primes dividing $b^n-1$, it follows that $a=b$. The answer is yes and this follows from the work of Schinzel \cite{Schinzel_1960}. A complete answer to an analogous question over arbitrary number fields was first given by Corrales and Schoof \cite{Corrales-Rodriganez_Schoof_1997}, who work under the less stringent assumption that the support of $a^n-1$, \emph{i.e.} the set of prime ideals dividing $a^n-1$, is only contained in the one of $b^n-1$ (rather than equal to it) and deduce that $b = a^m$ for some $m \in \mathbb{Z}$. Subsequently, several authors generalised and solved this problem in the broader context of abelian and split semi-abelian varieties, see  \cite{BGK_2003, Khare_Prasad_2004, Larsen_2003, Perucca_2009}.

Following \cite[Section 4]{Campagna_Dill}, one can formulate more general support problems over arbitrary Dedekind domains as follows: let $R$ be a Dedekind domain which is not a field and let $\mathcal{N}$ be an arbitrary countably infinite set. We are given a polynomial $f_n(T) \in R[T]$ for each $n \in \mathcal{N}$ and two elements $a,b \in R$. In this setting, the support problem asks us to understand how $a$ and $b$ are related if we know that for all but finitely many $n \in \mathcal{N}$, every prime ideal factor of $f_n(a)$ also divides $f_n(b)$. The most natural Dedekind domains to consider for concrete instances of the problem are certainly rings of $S$-integers in number fields and coordinate rings of smooth affine irreducible curves defined over fields of any characteristic. If $f_n(T)=T^n-1$ for all $n \in \mathcal{N} := \mathbb{N}$ as in the original question of Erd\H{o}s, we talk of the \textit{multiplicative support problem}. If $f_n(T)=\Psi_n(T)$ is the family of cyclotomic polynomials ($n \in \mathbb{N}$), we talk of the \textit{cyclotomic support problem}.

In \cite{Campagna_Dill}, motivated by questions arising in the context of arithmetic unlikely intersections, we were led to examine a modular variant of the support problem where the family of polynomials under consideration is the family of Hilbert class polynomials $H_D(T)$ for $D$ varying in the set $\mathbb{D}$ of negative integers congruent to $0$ or $1$ modulo $4$. By definition, $H_D(T) \in \mathbb{Z}[T]$ is the minimal polynomial of any $j$-invariant of a complex elliptic curve with complex multiplication by the unique quadratic order of discriminant $D$. The polynomial $H_D(T)$ can be viewed as a polynomial with coefficients in an arbitrary ring $R$ by means of the unique ring homomorphism $\mathbb{Z} \to R$. The \textit{modular support problem} has been solved almost completely in \cite{Campagna_Dill} in the case where $R$ is either a ring of $S$-integers in a number field or the coordinate ring of a smooth affine irreducible curve defined over an algebraically closed field of characteristic 0. Using Theorem \ref{thm:main_thm}, we can now settle the case where $R$ is the coordinate ring of a smooth affine irreducible curve defined over $F$, an algebraic closure of $\mathbb{F}_p$ as above.

\begin{thm} \label{thm:main_theorem}
    Let $R$ be the coordinate ring of a smooth affine irreducible curve $\mathcal{C}_{/F}$ and let $A, B \in R\setminus F$. Suppose that for all prime ideals $\mathfrak{p}$ of $R$ and for all but finitely many $D \in \mathbb{D}$ the implication
    \[
        \mathfrak{p} \mid H_D(A) \Rightarrow \mathfrak{p} \mid H_D(B)
    \]
    holds. Then there exists a non-negative integer $n$ such that $A=B^{p^n}$ or $B=A^{p^n}$.
\end{thm}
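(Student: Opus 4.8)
The plan is to deduce \cref{thm:main_theorem} from \cref{thm:main_thm}. Consider the morphism $\phi=(A,B)\colon\mathcal{C}\to\mathbb{A}^2_F$; since $A,B\in R\setminus F$ are non-constant, $\phi$ is non-constant, so the Zariski closure $\mathcal{C}'$ of its image is an integral closed curve in $\mathbb{A}^2_F$, the induced map $\mathcal{C}\to\mathcal{C}'$ is dominant (hence all but finitely many closed points of $\mathcal{C}'$ lie in its image), and the two coordinate functions $X,Y$ on $\mathcal{C}'$ pull back along $\phi$ to $A$ and $B$. Once we know that $\mathcal{C}'$ satisfies the hypothesis of \cref{thm:main_thm}, that theorem gives $X=Y^{p^n}$ or $Y=X^{p^n}$ for some $n\geq 0$, and applying the ring homomorphism $\phi^{*}$ yields $A=B^{p^n}$ or $B=A^{p^n}$, which is the conclusion we want. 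So it suffices to show: for all but finitely many $(x_1,x_2)\in\mathcal{C}'(F)$, the elements $x_1,x_2$ are $j$-invariants of elliptic curves over $F$ with isomorphic endomorphism rings.

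Since $F$ is algebraically closed, a closed point $\mathfrak{p}$ of $\mathcal{C}$ (equivalently, a maximal ideal of $R$) has residue field $F$ and image $\phi(\mathfrak{p})=(A(\mathfrak{p}),B(\mathfrak{p}))\in\mathcal{C}'(F)$, and every point of $\mathcal{C}'(F)$ outside a finite set arises this way; hence it is enough to prove that for all but finitely many $\mathfrak{p}$ the $j$-invariants $A(\mathfrak{p})$ and $B(\mathfrak{p})$ have isomorphic endomorphism rings. As $A$ and $B$ are non-constant, each element of $F$ is a value of $A$ (resp. of $B$) at only finitely many $\mathfrak{p}$; since there are only finitely many supersingular $j$-invariants in $F$, we may discard the finitely many $\mathfrak{p}$ for which $A(\mathfrak{p})$ or $B(\mathfrak{p})$ is supersingular and assume that both are ordinary. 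Write $\mathcal{O}_{D_1(\mathfrak{p})}$ and $\mathcal{O}_{D_2(\mathfrak{p})}$ for their respective endomorphism rings, of discriminants $D_1(\mathfrak{p}),D_2(\mathfrak{p})\in\mathbb{D}$; we must show $D_1(\mathfrak{p})=D_2(\mathfrak{p})$ for all but finitely many $\mathfrak{p}$.

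The key input is the theory of reduction of CM elliptic curves due to Deuring: for an ordinary $j$-invariant $j_0\in F$ with endomorphism ring $\mathcal{O}_{D_0}$ and any $D\in\mathbb{D}$, one has $H_D(j_0)=0$ in $F$ if and only if $D=D_0$. (Indeed, the roots of $H_D$ modulo $p$ are exactly the reductions modulo $p$ of the $j$-invariants of characteristic-zero elliptic curves with CM by $\mathcal{O}_D$; among these, the ones with ordinary reduction have reduction with endomorphism ring still equal to $\mathcal{O}_D$, so the ordinary roots of $H_D$ modulo $p$ are precisely the ordinary $j$-invariants whose endomorphism ring is $\mathcal{O}_D$.) Granting this, let $\mathcal{E}\subseteq\mathbb{D}$ be the finite set outside of which $\mathfrak{p}\mid H_D(A)\Rightarrow\mathfrak{p}\mid H_D(B)$ holds for all $\mathfrak{p}$. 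For each $D_0\in\mathcal{E}$ there are only finitely many $\mathfrak{p}$ with $D_1(\mathfrak{p})=D_0$, since such $\mathfrak{p}$ map under $A$ into the finite set of roots of $H_{D_0}$ in $F$; discarding these, we may assume $D_1(\mathfrak{p})\notin\mathcal{E}$. Then, by the displayed equivalence, $A(\mathfrak{p})$ being ordinary with endomorphism ring $\mathcal{O}_{D_1(\mathfrak{p})}$ gives $H_{D_1(\mathfrak{p})}(A(\mathfrak{p}))=0$, i.e. $\mathfrak{p}\mid H_{D_1(\mathfrak{p})}(A)$; the hypothesis then yields $\mathfrak{p}\mid H_{D_1(\mathfrak{p})}(B)$, i.e. $H_{D_1(\mathfrak{p})}(B(\mathfrak{p}))=0$; and since $B(\mathfrak{p})$ is ordinary, the equivalence forces $D_2(\mathfrak{p})=D_1(\mathfrak{p})$. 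This establishes the hypothesis of \cref{thm:main_thm} for $\mathcal{C}'$ and completes the deduction.

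The hard part will be making the Deuring translation precise, in particular being careful about the $j$-invariants where $H_D$ modulo $p$ is inseparable or acquires supersingular roots: restricting attention to ordinary $j$-invariants makes the statement clean, but one should set this up with care, and one should observe—as above—that because for an ordinary point the \emph{only} discriminant $D$ with $\mathfrak{p}\mid H_D(A)$ is $D_1(\mathfrak{p})$ itself, the one-directional divisibility hypothesis already pins $D_2(\mathfrak{p})$ down exactly, rather than only up to the ambiguity ($B=A^m$) that is present in the general support problem. The remaining ingredients—passing between closed points of $\mathcal{C}$ and of $\mathcal{C}'$, finiteness of the fibres of the non-constant functions $A$ and $B$, and the final invocation of \cref{thm:main_thm}—are routine.
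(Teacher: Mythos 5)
Your reduction to Theorem \ref{thm:main_thm} via the Zariski closure of the image of $(A,B)$ is exactly what the paper does, and the verification of its hypothesis (discard supersingular and exceptional points, then match discriminants via Deuring's theorems) also follows the paper's route. The one real issue is that your ``key input'' is false as stated: for an ordinary $j_0\in F$ with $\End(E_{j_0})\simeq\mathcal{O}_{D_0}$, it is \emph{not} true that $H_D(j_0)=0$ only if $D=D_0$. When $p$ divides the conductor of $\mathcal{O}_D$, Deuring's reduction theorem says the ordinary roots of $H_D$ modulo $p$ have endomorphism ring the order whose conductor is the prime-to-$p$ part of that of $\mathcal{O}_D$, not $\mathcal{O}_D$ itself; in particular $H_{p^2D_0}(j_0)=0$ as well, so the parenthetical justification of your ``only if'' direction is wrong (and the genuine subtlety is this conductor issue, not inseparability or supersingular roots). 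Your argument nevertheless survives, because you only ever apply the ``only if'' direction with $D=D_1(\mathfrak{p})$, the discriminant of the endomorphism ring of the \emph{ordinary} curve $E_{A(\mathfrak{p})}$, and such a discriminant automatically has conductor prime to $p$; with that observation added, $H_{D_1(\mathfrak{p})}(B(\mathfrak{p}))=0$ together with $B(\mathfrak{p})$ ordinary does force $D_2(\mathfrak{p})=D_1(\mathfrak{p})$. (The paper avoids the issue by concluding only that any two ordinary roots of the same $H_D$ modulo $p$ have isomorphic endomorphism rings, without claiming that this common ring is $\mathcal{O}_D$.) A further small simplification: once $H_{D_1(\mathfrak{p})}(B(\mathfrak{p}))=0$ with $A(\mathfrak{p})$ ordinary, the prime $p$ splits in $\mathbb{Q}(\sqrt{D_1(\mathfrak{p})})$ and so $B(\mathfrak{p})$ is automatically ordinary; discarding the supersingular values of $B$ beforehand is harmless but unnecessary.
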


Theorem \ref{thm:main_theorem} follows at once from applying Theorem \ref{thm:main_thm} to the Zariski closure of the image of the morphism $\mathcal{C} \to \mathbb{A}_F^2$ defined by the pair $(A,B)$. We provide a complete proof in Section \ref{sec:final section}.

This article is organised as follows: in Section \ref{sec:multiplicative case}, we answer our finite field variant of Lang's question. This also yields a solution to the multiplicative and cyclotomic support problems for the coordinate ring of a smooth affine irreducible curve defined over $F$, see Theorem \ref{thm:multiplicative}. In Section \ref{sec:recordantiae}, we review some relevant background on elliptic curves, which we then use in Section \ref{sec:apologeticum}, where we prove Theorem \ref{thm:main_thm}. Finally, in Section \ref{sec:final section}, we prove Theorem \ref{thm:main_theorem} and we show that every integral affine plane curve over $F$ that is neither a vertical nor a horizontal line contains infinitely many $F$-points whose coordinates are both roots of unity of the same order as well as $j$-invariants of elliptic curves with isomorphic endomorphism rings.

\section{\textsc{Variatio} -- a finite field variant of Lang's question} \label{sec:multiplicative case}

The main result of this section, Theorem \ref{thm:multiplicative}, solves the finite field variant of the original question of Lang about roots of unity on plane curves. Theorem \ref{thm:multiplicative} is obtained as a consequence of the work of Scanlon and Voloch \cite{Scanlon_Voloch}. Let us recall the main points of this work that we will use.

Let $p \in \mathbb{Z}$ be a prime and fix an algebraic closure $F$ of the finite field $\mathbb{F}_p$. Given a prime $\ell \neq p$ and $n \in \mathbb{Z}_{\geq 0}$, we denote by $\mu_{\ell^n} \subseteq F$ the group of roots of unity of order dividing $\ell^n$ and we set $\mu_{\ell^\infty} = \bigcup_{m \in \mathbb{N}}{\mu_{\ell^m}} \subseteq F$. It is not difficult to prove that there exist infinitely many $\sigma \in \mathrm{Gal}(\mathbb{F}_p(\mu_{\ell^\infty})/\mathbb{F}_p)$ such that $\sigma(\zeta)=\zeta^a$ for all $\zeta \in \mu_{\ell^{\infty}}$ where $a=a(\sigma) \in \mathbb{Z}$ is coprime to $p$. One can see this for instance as follows.

Choose an integer $n \in \mathbb{Z}_{>0}$ such that $\mathbb{F}_p(\mu_{\ell^m}) \neq \mathbb{F}_p(\mu_{\ell^{m+1}})$ for all integers $m \geq n$. Then for every $m\geq n$ we have 
\[
\mathrm{Gal}\left(\mathbb{F}_p(\mu_{\ell^m})/\mathbb{F}_p(\mu_{\ell^n}) \right) \simeq 1+\ell^n \cdot \left(\mathbb{Z}/\ell^m \mathbb{Z}\right)
\]
where an element $y \in 1+\ell^n \cdot \left(\mathbb{Z}/\ell^m \mathbb{Z}\right)$ corresponds to the automorphism acting on $\mu_{\ell^m}$ as $\zeta \mapsto \zeta^y$. Taking inverse limits, one gets the isomorphism
\[
\mathrm{Gal}\left(\mathbb{F}_p(\mu_{\ell^\infty})/\mathbb{F}_p(\mu_{\ell^n}) \right) \simeq 1 + \ell^n \mathbb{Z}_\ell
\]
where an element $y \in 1 + \ell^n \mathbb{Z}_\ell$ corresponds to the automorphism acting on $\mu_{\ell^m}$ as $\zeta \mapsto \zeta^{y \text{ mod } \ell^m}$ for all $m \geq n$. Take now any $a \in \mathbb{Z}$ which is not divisible by $p$ and such that $a \equiv 1 \text{ mod } \ell^n$. Set $x = (a-1)/\ell^n \in \mathbb{Z} \subseteq \mathbb{Z}_\ell$. Then the automorphism $\sigma \in \mathrm{Gal}\left(\mathbb{F}_p(\mu_{\ell^\infty})/\mathbb{F}_p(\mu_{\ell^n}) \right)$ corresponding to $1+\ell^n x$ acts on $\mu_{\ell^\infty}$ as $\zeta \mapsto \zeta^a$ as we wanted to show.

Let now $X \subseteq \mathbb{G}_{m,F}^2$ be a closed integral curve. Suppose that there exist infinitely many points $(\zeta,\zeta') \in X(F)$ with $\zeta, \zeta' \in \mu_{\ell^\infty}$ for some fixed prime $\ell \neq p$. By the discussion above, we can find $\tau \in \mathrm{Gal}(F/\mathbb{F}_p)$ such that $\tau(\eta)=\eta^a$ for all $\eta \in \mu_{\ell^\infty}$ where $a\in \mathbb{N}$ is coprime to $p$. We can then apply \cite[Proposition 3]{Scanlon_Voloch} with, using the notation of \cite{Scanlon_Voloch}, $g = 2$ and $(L,\sigma)$ a difference closed field such that $F \subseteq L$ and $\sigma\lvert_F = \tau$ (such a difference closed field exists by \cite[Theorem on p.~3007]{Chatzidakis_Hrushovski}) to deduce the following result.

\begin{thm}[Scanlon-Voloch] \label{thm:scanlonvoloch}
    Let $X \subseteq \mathbb{G}_{m,F}^2$ be a closed integral curve and suppose that there exists a prime $\ell \neq p$ for which there exist infinitely many points $(\zeta,\zeta') \in X(F)$ with $\zeta, \zeta' \in \mu_{\ell^\infty}$. Then $X$ is an irreducible component of an algebraic subgroup of $\mathbb{G}_{m,F}^2$.
\end{thm}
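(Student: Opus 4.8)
The plan is to deduce the statement directly from \cite[Proposition 3]{Scanlon_Voloch}, after transporting the hypothesis into the language of difference algebra exactly as in the paragraph preceding the theorem. First I would fix the data constructed there: an automorphism $\tau \in \mathrm{Gal}(F/\mathbb{F}_p)$ acting on $\mu_{\ell^\infty}$ by $\eta \mapsto \eta^{a}$ with $a \in \mathbb{N}$ coprime to $p$, together with a difference-closed field $(L,\sigma)$ such that $F \subseteq L$ and $\sigma|_{F} = \tau$, whose existence is guaranteed by \cite[Theorem on p.~3007]{Chatzidakis_Hrushovski}. The choice of $a$ coprime to $p$ ensures that the difference equation $\sigma(t) = t^{a}$ is of the type handled in \cite{Scanlon_Voloch}; its solution set
\[
\Gamma_0 := \{\, t \in \mathbb{G}_m(L) \;:\; \sigma(t) = t^{a} \,\}
\]
is then a subgroup of $\mathbb{G}_m(L)$ of the kind for which their proposition provides structural information, and by construction $\mu_{\ell^\infty} \subseteq \Gamma_0$.

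Next I would check that the hypothesis of the theorem supplies precisely the input needed by \cite[Proposition 3]{Scanlon_Voloch} in the case $g = 2$: the infinitely many points $(\zeta,\zeta') \in X(F)$ with $\zeta,\zeta' \in \mu_{\ell^\infty}$ all lie in $\Gamma := \Gamma_0 \times \Gamma_0 \subseteq \mathbb{G}_{m}(L)^{2}$, and since $X$ is an integral curve, every infinite subset of $X$ is Zariski-dense; hence $X(L) \cap \Gamma$ is Zariski-dense in $X$. Applying the proposition yields that $X$ is a finite union of cosets of algebraic subgroups of $\mathbb{G}_{m,F}^{2}$. Since $X$ is irreducible and possesses $F$-points, it must coincide with a single coset $\xi \cdot G$ where $G$ is a connected algebraic subgroup and $\xi \in \mathbb{G}_m^{2}(F)$ — hence $\xi$ is of finite order, as $F^{\times}$ is a torsion group. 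Finally, $\xi \cdot G$ is an irreducible component of the algebraic subgroup $\langle \xi \rangle \cdot G$, which gives the asserted conclusion.

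The argument is thus essentially an application of a black box, and the only real work lies in matching our situation to the exact hypotheses of \cite[Proposition 3]{Scanlon_Voloch}: that the ambient difference-closed field may be taken to contain $F$ with the prescribed $\sigma$ (the role of \cite{Chatzidakis_Hrushovski}), and that the group of special points under consideration is governed by the exponent $a$ rather than by a power of Frobenius — which is exactly why the preceding discussion arranges for $a$ to be coprime to $p$. The remaining steps, passing from ``infinitely many points on a curve'' to ``Zariski-dense'' and from a torsion coset to an irreducible component of an algebraic subgroup, are routine.
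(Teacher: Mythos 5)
Your proposal follows exactly the route the paper takes: the paper's ``proof'' of this theorem is precisely the paragraph preceding its statement, which constructs $\tau$ acting on $\mu_{\ell^\infty}$ by $\eta\mapsto\eta^a$ with $a$ coprime to $p$, invokes \cite[Theorem on p.~3007]{Chatzidakis_Hrushovski} to embed $(F,\tau)$ into a difference closed field, and applies \cite[Proposition 3]{Scanlon_Voloch} with $g=2$. Your additional closing steps (infinitely many points on an integral curve are Zariski-dense, irreducibility reduces the finite union of cosets to a single coset $\xi G$ with $\xi$ torsion because $F^{\times}$ is a torsion group, hence $X$ is a component of $\langle\xi\rangle\cdot G$) are exactly the routine details the paper leaves implicit, and they are correct.
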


The above theorem now implies the following corollary.

\begin{cor} \label{cor:multiplicative}
Let $\mathcal{C} \subseteq \mathbb{G}_{m,F}^2$ be a closed integral curve. Suppose that for all but finitely many points $(x_1,x_2) \in \mathcal{C}(F)$ the multiplicative order of $x_2$ divides the multiplicative order of $x_1$. Then $\mathcal{C}$ is an irreducible component of an algebraic subgroup of $\mathbb{G}_{m,F}^2$.
\end{cor}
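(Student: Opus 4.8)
The plan is to reduce the statement to Theorem \ref{thm:scanlonvoloch} by passing to a prime $\ell$ for which infinitely many points of $\mathcal{C}$ have $\ell$-power-order coordinates. First I would observe that, since $\mathcal{C}$ is a curve over $F = \overline{\mathbb{F}_p}$, every point of $\mathcal{C}(F)$ has coordinates in some finite field $\mathbb{F}_{p^m}$, hence these coordinates are either $0$ or roots of unity of order prime to $p$; since $\mathcal{C} \subseteq \mathbb{G}_{m,F}^2$, the coordinates are never $0$, so in fact for \emph{every} point $(x_1,x_2) \in \mathcal{C}(F)$ both $x_1$ and $x_2$ are roots of unity of order coprime to $p$. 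In particular the multiplicative order of $x_2$ is always finite, so the hypothesis that it divides the order of $x_1$ is a genuine constraint only through the primes dividing these orders.

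Next I would argue that there must exist a single prime $\ell \neq p$ such that infinitely many points $(x_1,x_2) \in \mathcal{C}(F)$ have both $x_1$ and $x_2$ of $\ell$-power order. Indeed, $\mathcal{C}(F)$ is infinite (a curve over an infinite field has infinitely many points; alternatively the hypothesis is vacuous otherwise and one could invoke $\mathbb{G}_{m,F}^2$ itself, but $\mathcal{C}$ being a curve it is infinite), and removing the finitely many exceptional points we still have infinitely many points for which $\ord(x_2) \mid \ord(x_1)$. If for every prime $\ell \neq p$ only finitely many of these points had $x_1 \in \mu_{\ell^\infty}$, then... here one needs a counting argument: writing $\ord(x_1)$ as a product of prime powers, one wants to conclude that the primes appearing are unbounded, which would contradict the curve $\mathcal{C}$ having bounded degree via a height or intersection-theoretic estimate. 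Concretely, if $x_1$ has order divisible by two distinct large primes $\ell_1, \ell_2$, then $(x_1,x_2)$ is a torsion point of $\mathbb{G}_{m,F}^2$ whose order has many prime factors; by the theory of the multiplicative group over finite fields (or by Mann's theorem / the Manin--Mumford-type statement in positive characteristic, which here is just the finiteness of torsion of bounded order in a curve not a subgroup coset), a fixed curve can contain only finitely many torsion points whose order is, say, a product of two distinct primes both exceeding a suitable bound depending on $\deg \mathcal{C}$. Pushing this through forces infinitely many of our points to have $\ord(x_1)$ (hence $\ord(x_2)$) supported at a single prime $\ell \neq p$, after passing to a subsequence.

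With such an $\ell$ in hand, the infinitely many points $(x_1,x_2)$ with $x_1 \in \mu_{\ell^\infty}$ and $\ord(x_2) \mid \ord(x_1)$ automatically satisfy $x_2 \in \mu_{\ell^\infty}$ as well. Thus Theorem \ref{thm:scanlonvoloch} applies verbatim with $X = \mathcal{C}$ and yields that $\mathcal{C}$ is an irreducible component of an algebraic subgroup of $\mathbb{G}_{m,F}^2$, which is exactly the conclusion.

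The main obstacle is the middle step: extracting a single prime $\ell$ at which infinitely many points concentrate. One must rule out the possibility that the orders of the first coordinates of our points, while finite, involve more and more distinct primes. The cleanest way is probably to invoke directly the characterisation of torsion points on curves in $\mathbb{G}_m^2$ over $F$ (equivalently, that a curve which is not a coset of a subgroup meets $\mathbb{G}_m^2[\text{tors}]$ in a set whose orders have a uniformly bounded number of prime factors, or even bounded "new" prime factors), which itself follows from Theorem \ref{thm:scanlonvoloch} applied to various $\ell$ or from elementary height arguments over function fields; if $\mathcal{C}$ \emph{is} already a coset of a subgroup we are of course done immediately, so we may assume it is not and the dichotomy is available.
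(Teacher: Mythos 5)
Your overall strategy (reduce to Theorem \ref{thm:scanlonvoloch} by producing a single prime $\ell \neq p$ with infinitely many points of $\mathcal{C}$ in $\mu_{\ell^\infty}^2$) is the right one, and your first and last steps are fine. The gap is in the middle step, and it is genuine: you try to \emph{extract} the prime $\ell$ from the given set of points by a concentration argument, and the finiteness statement you lean on --- that a curve which is not a component of a subgroup ``can contain only finitely many torsion points whose order is a product of two distinct primes both exceeding a suitable bound'' --- is false over $F = \overline{\mathbb{F}_p}$. Every point of $\mathbb{G}_{m,F}^2(F)$ is torsion, so there is no Manin--Mumford-type dichotomy of this kind in this setting: for instance, the curve $y = x+1$ contains, for all pairs of large distinct primes $\ell_1,\ell_2 \neq p$, points whose order is divisible by $\ell_1\ell_2$ (take $x$ a primitive $\ell_1\ell_2$-th root of unity). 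This failure of naive Manin--Mumford is precisely why the Scanlon--Voloch theorem is stated for $\ell$-power torsion with a \emph{fixed} $\ell$ and requires the difference-field machinery. So your proposed route to the key claim does not work as written.

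The fix is much simpler than what you attempt, and it is what the paper does: one does not analyse the existing points at all, but instead \emph{chooses} $\ell$ first and then manufactures the points. If the first coordinate projection is constant, $\mathcal{C}$ is a vertical line $\{Q\}\times\mathbb{G}_{m,F}$ with $Q$ torsion, hence a component of $\mu_r \times \mathbb{G}_{m,F}$, and we are done. Otherwise the first projection is dominant, so by \cite[Proposition 15.4~(1)]{GoertzWedhorn} its image omits only finitely many elements of $F^{\ast}$; hence for every sufficiently large prime $\ell \neq p$ and every $m \in \mathbb{N}$ there is a point $(\zeta,\zeta') \in \mathcal{C}(F)$ with $\zeta$ a primitive $\ell^m$-th root of unity. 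Taking $\ell$ large enough that no such $\zeta$ is the first coordinate of one of the finitely many exceptional points, the hypothesis gives $\ord(\zeta') \mid \ell^m$, so $(\zeta,\zeta') \in \mu_{\ell^\infty}^2$; varying $m$ produces the infinitely many points needed for Theorem \ref{thm:scanlonvoloch}. You should replace your counting step by this surjectivity argument (and add the trivial vertical-line case, which your argument also silently excludes).
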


\begin{proof}
If the first coordinate projection $\mathcal{C} \to \mathbb{G}_{m,F}$ is constant on $\mathcal{C}$, then we have that $\mathcal{C} = \{Q\} \times \mathbb{G}_{m,F}$ for some $Q \in \mathbb{G}_{m,F}(F)$. Since every point in $\mathbb{G}_{m,F}(F)$ is torsion, the curve $\mathcal{C}$ is an irreducible component of an algebraic subgroup of $\mathbb{G}_{m,F}^2$.

Hence, we can assume that the first coordinate projection is not constant on $\mathcal{C}$. Thanks to \cite[Proposition 15.4~(1)]{GoertzWedhorn}, applied to the first coordinate projection, there exists $N_0 \in \mathbb{N}$ such that for every root of unity $\zeta$ of order $n > N_0$ there exists some $\zeta' \in F^{\ast}$ such that $(\zeta,\zeta') \in \mathcal{C}(F) \subseteq \mathbb{G}^2_{m,F}(F) \simeq (F^{\ast})^2$.

In particular, this holds if $n$ is any positive power of a sufficiently large prime $\ell \neq p$. After choosing a larger $\ell$ if necessary, we can deduce from our hypothesis that the order of $\zeta'$ is a non-negative power of $\ell$. Varying the power of $\ell$ shows that $\mathcal{C}$ contains infinitely many points with coordinates in $\mu_{\ell^\infty}$. By Theorem \ref{thm:scanlonvoloch}, the curve $\mathcal{C}$ is an irreducible component of an algebraic subgroup of $\mathbb{G}^2_{m,F}$.
\end{proof}

\begin{rmk}
Corollary \ref{cor:multiplicative} can also be deduced from recent work of Schefer, more specifically from \cite[Theorem 1.2]{Schefer}. This theorem gives an upper bound on the number of $F$-points of order at most $T$ on a closed integral curve in $\mathbb{G}^2_{m,F}$ that is not an irreducible component of an algebraic subgroup. It follows from our hypothesis together with an asymptotic for the number of roots of unity in $F$ of order at most $T$ (see for instance \cite[Theorem 1.4]{Schefer}) that the curve $\mathcal{C}$ in Corollary \ref{cor:multiplicative} has ``too many" points of order at most $T$ and so one concludes again that $\mathcal{C}$ must be an irreducible component of an algebraic subgroup of $\mathbb{G}_{m,F}^2$.
\end{rmk}

From Corollary \ref{cor:multiplicative}, one can readily deduce a solution to both the multiplicative and the cyclotomic support problems for function fields of positive characteristic. One can essentially use the proof of the corresponding support problems for function fields of characteristic $0$, see \cite[Theorem 4.6]{Campagna_Dill}, but now the Frobenius endomorphism has to be taken into account.

\begin{thm}\label{thm:multiplicative}
Let $R$ be the coordinate ring of a smooth affine irreducible curve $\mathcal{C}_{/F}$ and let $A, B \in R\setminus F$. Let $N_0 \in \mathbb{N}$. The
following hold:
\begin{enumerate}
 \item Suppose that for all $n \in \mathbb{N}$ with $n > N_0$, every prime ideal of $R$ that divides $A^n - 1$
also divides $B^n - 1$. Then $B^{p^m} = A^k$ for some $k \in \mathbb{Z}\setminus\{0\}$ and $m \in \mathbb{Z}_{\geq 0}$.
\item Suppose that for all $n \in \mathbb{N}$ with $n > N_0$, every prime ideal of $R$ that divides $\Psi_n(A)$
also divides $\Psi_n(B)$. Then either $B = A^{\pm p^{m}}$ or $A = B^{\pm p^m}$ for some $m \in \mathbb{Z}_{\geq 0}$.
\end{enumerate}
\end{thm}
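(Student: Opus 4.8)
The plan is to deduce both parts from Corollary~\ref{cor:multiplicative}, applied to the curve cut out in $\mathbb{G}_{m,F}^2$ by the pair $(A,B)$, and then to eliminate the torsion ambiguity that remains by feeding the divisibility hypothesis in a second time at one carefully chosen prime ideal; this mirrors the characteristic-$0$ argument of \cite[Theorem 4.6]{Campagna_Dill}, but with the Frobenius brought in. Let $K$ be the fraction field of $R$; since $\mathcal{C}$ is geometrically integral over $F$, the field $F$ is the field of constants of $K$, and $A,B\in R\setminus F$ are non-constant. Let $U\subseteq\mathcal{C}$ be the non-empty open locus where $A$ and $B$ are both non-zero and let $\mathcal{C}'\subseteq\mathbb{G}_{m,F}^2$ be the Zariski closure of the image of $(A,B)\colon U\to\mathbb{G}_{m,F}^2$; it is an integral closed curve, both of whose coordinate projections are non-constant. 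For a closed point $P$ of $\mathcal{C}$ one has $\mathfrak{p}_P\mid A^n-1$ if and only if $A(P)^n=1$, and (for $p\nmid n$) $\mathfrak{p}_P\mid\Psi_n(A)$ if and only if $A(P)$ has exact multiplicative order $n$, and likewise for $B$. Since every element of $F^\times$ is a root of unity and $A$ is finite-to-one on $U$, only finitely many $P\in U$ have $\ord(A(P))\le N_0$; consequently, in case~(1) all but finitely many $(x_1,x_2)\in\mathcal{C}'(F)$ satisfy $\ord(x_2)\mid\ord(x_1)$, and in case~(2) all but finitely many satisfy $\ord(x_2)=\ord(x_1)$.

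By Corollary~\ref{cor:multiplicative}, the curve $\mathcal{C}'$ is an irreducible component of an algebraic subgroup of $\mathbb{G}_{m,F}^2$; being one-dimensional, it is a translate of a one-dimensional subtorus, so $\mathcal{C}'=\{x_1^ax_2^b=\omega\}$ for some coprime integers $a,b$ and some root of unity $\omega\in F^\times$ (whose order is automatically coprime to $p$), with $a,b\neq 0$ because the projections are non-constant. Fixing a point $(\zeta_1,\zeta_2)\in\mathcal{C}'(F)$, the $F$-points of $\mathcal{C}'$ are the $(\zeta_1t^b,\zeta_2t^{-a})$ with $t\in F^\times$. For a prime $\ell\neq p$ and $t$ of order $\ell^s$ with $s$ large — larger than the $\ell$-adic valuations of $\ord(\zeta_1),\ord(\zeta_2),a,b$ — the fact that in a finite cyclic $\ell$-group the sum of two elements of distinct orders has the larger order yields $v_\ell(\ord(\zeta_1t^b))=s-v_\ell(b)$ and $v_\ell(\ord(\zeta_2t^{-a}))=s-v_\ell(a)$. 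This produces infinitely many points of $\mathcal{C}'(F)$, so the order condition from the previous paragraph must hold for all but finitely many of them: in case~(1) it forces $v_\ell(b)\le v_\ell(a)$, hence $v_\ell(b)=0$ since $\gcd(a,b)=1$, so $b=\pm p^m$ for some $m\ge0$; in case~(2) it forces moreover $v_\ell(a)=0$, so also $a=\pm p^{m'}$, and then $\gcd(a,b)=1$ gives $\min(m,m')=0$.

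It remains to remove the constant $\omega$, and here the hypothesis is used a second time. In case~(1), the previous paragraph lets us rewrite $A^aB^b=\omega$ as $B^{p^m}=\omega'A^k$ with $k\in\mathbb{Z}\setminus\{0\}$ and $\omega'$ a root of unity, of order $e$ say. Since $A$ extends to a surjective morphism from the smooth completion of $\mathcal{C}$ onto $\mathbb{P}^1_F$, its restriction to $\mathcal{C}$ omits only finitely many values, so we may choose $n>N_0$ coprime to $pke$ together with a point $P\in\mathcal{C}$ at which $A(P)$ is a primitive $n$-th root of unity; then $P\in U$. From the relation, $\ord(B(P))=\ord(B(P)^{p^m})=\ord(\omega'A(P)^k)=en$, using that $x\mapsto x^{p^m}$ is an automorphism of $F^\times$ and that $e,k$ are coprime to $n$; but the hypothesis at $P$ gives $B(P)^n=1$, so $\ord(B(P))\mid n$. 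Hence $e=1$ and $B^{p^m}=A^k$. In case~(2) we have $a=\pm p^{m'}$ and $b=\pm p^m$ with $\min(m,m')=0$, so $A^aB^b=\omega$ reads either $B=\omega''A^{\pm p^{m'}}$ (when $b=\pm1$) or $A=\omega''B^{\pm p^m}$ (when $a=\pm1$), for some root of unity $\omega''$; in the first case, choosing $P$ with $A(P)$ a primitive $n$-th root of unity for suitable large $n$ coprime to $p$ and to $\ord(\omega'')$, the cyclotomic hypothesis forces $\ord(B(P))=n$ while the relation gives $\ord(B(P))=\ord(\omega'')\cdot n$, so $\ord(\omega'')=1$ and $B=A^{\pm p^{m'}}$; the second case is symmetric and gives $A=B^{\pm p^m}$.

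The main obstacle is this last step. The geometric input (the first two paragraphs) only pins $\mathcal{C}'$ down up to a torsion translation, and the root of unity it leaves behind has to be killed by a renewed, pointwise use of the arithmetic hypothesis; it is here that one genuinely exploits that in characteristic $p$ every root of unity has order prime to $p$ and that Frobenius is an automorphism of $F^\times$. One must also be a little careful with the orders of products in the cyclic $\ell$-groups appearing in the second paragraph, but this becomes routine once $s$ is taken large enough.
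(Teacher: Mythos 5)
Your proposal is correct and follows essentially the same route as the paper: reduce to Corollary \ref{cor:multiplicative} via the curve cut out by $(A,B)$ in $\mathbb{G}_{m,F}^2$, obtain the multiplicative relation $A^aB^b=\omega$, and then determine the exponents and kill $\omega$ by re-applying the divisibility hypothesis at points where $A$ takes a root of unity of controlled order. The only difference is one of presentation: where the paper pushes the relation forward by Frobenius powers and then cites the characteristic-$0$ argument of \cite[Theorem 4.6]{Campagna_Dill} for the final step, you carry out that final step explicitly (your $\ell$-adic valuation computation and the pointwise elimination of $\omega$), and your details check out up to routine adjustments (e.g.\ taking $s$ larger than the \emph{sum} of the relevant valuations, and noting that the ``symmetric'' subcase of part (2) is handled by the same one-directional use of the hypothesis rather than by swapping $A$ and $B$).
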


\begin{proof}
Until the very end of this proof, we treat both cases simultaneously. The tuple $(A,B)$ defines a rational map $\varphi: \mathcal{C} \dashrightarrow \mathbb{G}_{m,F}^2$. Let $\mathcal{C}'$ denote the Zariski closure of the image of $\varphi$. Since $A$ is non-constant, $\mathcal{C}'$ is a curve and $\varphi$ has finite fibers. By \cite[Proposition 15.4~(1)]{GoertzWedhorn}, the set $\mathcal{C}'\setminus \varphi(\mathcal{C})$ is finite. The fact that $A$ is non-constant also implies that the multiplicative order of $A(Q)$ is larger than $N_0$  for all but finitely many $Q \in \mathcal{C}(F)$. Since the maximal ideals of $R$ are in canonical bijection with the $F$-points of $\mathcal{C}$, it follows from either one of our hypotheses that for all but finitely many points $(x_1,x_2) \in \mathcal{C}'(F)$, the multiplicative order of $x_2$ divides the multiplicative order of $x_1$. Corollary \ref{cor:multiplicative} then implies that some relation $A^k B^l = \eta$ holds in $R$ with $k, l \in \mathbb{Z}$ coprime and $\eta$ a root of unity of order $r$, coprime to $p$. Since none of the two coordinate projections is constant on $\mathcal{C}'$, it follows that $kl \neq 0$.

Write $k = k'p^{m_1}$ and $l = l'p^{m_2}$ with $k',l' \in \mathbb{Z}$, $m_1,m_2 \in \mathbb{Z}_{\geq 0}$, and $p \nmid k'l'$. The Zariski closure $\widetilde{\mathcal{C}}$ of the image of $\mathcal{C}'$ under the morphism $[p^{m_1},p^{m_2}]: \mathbb{G}^2_{m,F} \to \mathbb{G}^2_{m,F}$ that sends $(x,y) \in (F^{\ast})^2$ to $(x^{p^{m_1}},y^{p^{m_2}})$ is equal to the closed subscheme of $\mathbb{G}_{m,F}^2$ that is defined by the equation $X^{k'}Y^{l'} = \eta$ where $X,Y$ are the affine coordinates on $\mathbb{G}^2_{m,F}$. Furthermore, in both case (1) as well as case (2), some non-empty open subscheme of the smooth curve $\widetilde{\mathcal{C}}$ also satisfies the respective hypothesis of Theorem \ref{thm:multiplicative} with the same $N_0$ and with $A,B$ equal to the two coordinate projections since the Frobenius endomorphism of $F$ preserves the multiplicative order of a point in $F^{\ast}$. We can then essentially copy the proof of \cite[Theorem 4.6]{Campagna_Dill} to deduce that $\eta = 1$ and $|l'| = 1$ in case (1) while in case (2), we obtain that $\eta = 1$ and $|k'| = |l'| = 1$. The only small adjustment that is necessary is that at any point in the proof of \cite[Theorem 4.6]{Campagna_Dill} where a sufficiently large prime needs to be chosen, one should take a prime different from $p$. This directly yields the corollary in both cases since $k$ and $l$ are coprime and so $m_1m_2 = 0$.
\end{proof}

Note that Theorem \ref{thm:multiplicative}~(2) provides an answer to our finite field variant of Lang's question about roots of unity. Namely, if all but finitely many $F$-points of an integral affine plane curve $\mathcal{C}_{/F}$ have coordinates that are both roots of unity of the same order, then the coordinate functions $X$ and $Y$ on $\mathcal{C}$ satisfy either $X = Y^{\pm p^n}$ or $Y = X^{\pm p^n}$ for some $n \in \mathbb{Z}_{\geq 0}$.

\section{\textsc{Repetitio} -- finite fields and complex multiplication}\label{sec:recordantiae}

From now on, we turn our attention to the finite field variant of Andr\'e's theorem, \emph{i.e.} to Theorem \ref{thm:main_thm}. Before beginning with its proof, we find it convenient to pause for a bit and recall in this short section some well-known facts about elliptic curves that will be frequently used, tacitly or not, in what follows.

Given an algebraically closed field $K$, every element $x \in K$ is the $j$-invariant of some elliptic curve ${(E_x)}_{/K}$. For at most finitely many $x \in K$, the curve $E_x$ is \textit{supersingular}, meaning that its endomorphism ring $\End(E_x)$ has rank $4$ as an abelian group. Suppose now that $K$ is equal to $F$, an algebraic closure of the finite field $\mathbb{F}_p$ for a prime $p$ as above. If $x \in F$ and $E_x$ is not supersingular, then the elliptic curve $E_x$ is called \textit{ordinary}. The endomorphism ring of an ordinary elliptic curve is isomorphic to an order in an  imaginary quadratic field. We will sometimes, by metonymy, call the $j$-invariant $x$ of an ordinary/supersingular elliptic curve $(E_x)_{/F}$ an ordinary/supersingular $j$-invariant. Furthermore, we will say that an element $x$ of an arbitrary algebraically closed field $K$ has CM by the order $\mathcal{O} \not\simeq \mathbb{Z}$ to mean that the curve $E_x$ has complex multiplication by $\mathcal{O}$, \emph{i.e.} that the endomorphism ring of $E_x$ is isomorphic to $\mathcal{O}$.

The next proposition shows that, in the case where $K = F$, applying the Frobenius endomorphism to $x \in K$ does not change its associated CM order.

\begin{prop}\label{prop:Frobenius}
    For every $x \in F$ that is the $j$-invariant of an elliptic curve with complex multiplication by an order $\mathcal{O}$, the element $x^p \in F$ is also the $j$-invariant of an elliptic curve with complex multiplication by $\mathcal{O}$.
\end{prop}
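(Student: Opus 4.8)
The plan is to exploit the fact that raising the $j$-invariant to the $p$-th power corresponds to applying the Frobenius morphism on the level of elliptic curves. Concretely, let $E$ be an elliptic curve over $F$ with $j(E) = x$ and $\End(E) \simeq \mathcal{O}$. Then there is a purely inseparable isogeny $\phi \colon E \to E^{(p)}$, the relative Frobenius, where $E^{(p)}$ is the base change of $E$ along the $p$-th power map $F \to F$; its $j$-invariant is $j(E^{(p)}) = j(E)^p = x^p$. Since an isogeny of elliptic curves over a field does not change the endomorphism algebra, $\End(E^{(p)}) \otimes \mathbb{Q} \simeq \End(E) \otimes \mathbb{Q}$, so $E^{(p)}$ is again ordinary (its endomorphism algebra is an imaginary quadratic field), and it remains only to check that the order is preserved, not just the field.

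The first step is thus to reduce to showing $\End(E^{(p)}) \simeq \End(E)$ as orders. For this I would use that $\phi \colon E \to E^{(p)}$ is an isogeny of degree $p$, so $\phi^\vee \circ \phi = [p]$, and the usual formula relating the endomorphism rings of isogenous curves: if $\phi \colon E \to E'$ has degree $m$, then $\End(E')$ contains $\frac{1}{m}\phi \circ \End(E) \circ \phi^\vee$ and is contained in $\frac{1}{m}\phi^\vee{}^{\vee}$-type expressions, giving $\mathbb{Z} + m\mathcal{O}' \subseteq \text{(image of }\mathcal{O}) \subseteq \mathcal{O}'$ up to conductor $m$. More efficiently, I would invoke the classical fact (Deuring) that over $F$ the ordinary elliptic curves with CM by a fixed order $\mathcal{O}$ of discriminant prime to $p$ are exactly the reductions of the CM elliptic curves over $\overline{\mathbb{Q}}$ with CM by $\mathcal{O}$, and their $j$-invariants form a single Galois orbit over $\mathbb{F}_p$ whose size is the class number $h(\mathcal{O})$; since $x \mapsto x^p$ is an element of $\mathrm{Gal}(\overline{\mathbb{F}_p}/\mathbb{F}_p)$, it permutes this finite set of $j$-invariants, hence preserves the associated order. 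One must separately handle the case $p \mid \mathrm{disc}(\mathcal{O})$, but here $\mathcal{O}$ contains the maximal order localized away from $p$ and the conductor argument via $\phi^\vee \circ \phi = [p]$ shows the $p$-part of the conductor cannot change either, since Frobenius is an isogeny of $p$-power degree in both directions.

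The main obstacle is making the order-preservation (as opposed to merely field-preservation) argument clean. The cheapest rigorous route is probably: $E$ and $E^{(p)}$ are $F$-isogenous via $\phi$ of degree $p$, so $[\mathcal{O} : \mathcal{O} \cap \mathcal{O}']$ and $[\mathcal{O}' : \mathcal{O} \cap \mathcal{O}']$ both divide $p$; but also $E^{(p)}$ and $E$ are related by applying Frobenius again and again, and after iterating one returns (since everything is defined over a finite field, $E^{(p^r)} \simeq E$ for suitable $r$) so the conductor, which can only change by bounded $p$-power factors at each step, must in fact be unchanged. Alternatively, and most slickly, note $E^{(p)}$ is simply $E$ viewed over $F$ via the automorphism $\mathrm{Frob}_p$ of $F$; applying a field automorphism to an elliptic curve over an algebraically closed field visibly preserves the endomorphism ring up to isomorphism, so $\End(E^{(p)}) \simeq \End(E) \simeq \mathcal{O}$ and we are done. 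I would present this last observation as the core of the proof, with the Frobenius-isogeny picture as supporting intuition.
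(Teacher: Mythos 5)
Your final observation is correct and is the right thing to lead with: since $F$ is algebraically closed (hence perfect), the $p$-power map $\mathrm{Frob}_p\colon F\to F$ is a field automorphism, $E^{(p)}$ is just the base change of $E$ along it, and base change along a field automorphism is an equivalence of categories, so it induces a ring isomorphism $\End(E)\simeq\End(E^{(p)})$ while sending $j(E)=x$ to $j(E^{(p)})=x^p$. This is a complete proof, and it is a genuinely different (and more elementary) route than the paper's: the paper instead regards $E\to E^{(p)}$ as the Frobenius \emph{isogeny} of degree $p$ between curves over the \emph{same} base and then invokes a theorem from Deuring theory (Lang, Chapter 13, Theorem 11) to conclude that such an isogeny preserves the CM order. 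Your argument buys simplicity and uniformity -- it needs no input from Deuring theory and applies verbatim to supersingular curves -- whereas the paper's route fits the Frobenius map into the isogeny-theoretic framework it uses throughout Section 4. Do discard the intermediate attempts as you propose: the conductor-bounding heuristics are unnecessary, and the claim that the $j$-invariants with CM by $\mathcal{O}$ form a \emph{single} Galois orbit over $\mathbb{F}_p$ is false in general (the reduction of $H_D$ modulo $p$ typically splits into several irreducible factors of equal degree); fortunately you do not rely on it.
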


\begin{proof}
    Let $E_{/F}$ be an elliptic curve with $j(E)=x$, say given by some Weierstrass model. Then the curve $E^{(p)}$, whose model is obtained by raising all the coefficients of the model for $E$ to the $p$-th power, has $j$-invariant $j(E^{(p)})=x^p$. The curves $E$ and $E^{(p)}$ are related by the Frobenius homomorphism $(X,Y) \mapsto (X^p,Y^p)$, which is an isogeny of degree $p$ by \cite[Chapter II, Proposition 2.11]{Silverman_book_2009}. The result now follows from \cite[Chapter 13, Theorem 11]{Lang_1987}.
\end{proof}

For the next proposition, recall that the endomorphism algebra of an elliptic curve $E_{/F}$ is defined to be $\End(E) \otimes_{\mathbb{Z}} \mathbb{Q}$.

\begin{prop}\label{prop:isogenous}
    Let $E_{/F}$ and $E'_{/F}$ be two elliptic curves with isomorphic endomorphism algebrae. Then $E$ and $E'$ are isogenous.
\end{prop}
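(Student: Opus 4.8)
\emph{Sketch of the intended proof.} The plan is to descend to a finite field and apply Tate's isogeny theorem. Since $F$ is an algebraic closure of $\mathbb{F}_p$, any elliptic curve over $F$ is defined over a finite subfield, and its Frobenius endomorphism forces the endomorphism ring to have $\mathbb{Z}$-rank $2$ (in the ordinary case) or $4$ (in the supersingular case); hence the endomorphism algebra is an imaginary quadratic field or a quaternion algebra, and if $E$ and $E'$ have isomorphic endomorphism algebrae they are either both supersingular or both ordinary. I would treat the two cases separately, fixing once and for all a finite field $\mathbb{F}_q$ with $q = p^v$ over which both $E$ and $E'$ are defined and writing $\pi,\pi'$ for their $q$-power Frobenius endomorphisms.

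If $E$ and $E'$ are supersingular, one may simply invoke the classical fact that the supersingular elliptic curves over $F$ form a single isogeny class (see, e.g., \cite[Chapter~V]{Silverman_book_2009}); alternatively, over a suitable finite extension $\mathbb{F}_{q^N}$ the Frobenius endomorphisms $\pi^N$ and $(\pi')^N$ both become the integer $q^{N/2}$ --- the eigenvalues of a supersingular Frobenius being $q^{1/2}$ times roots of unity --- so $E$ and $E'$ have the same characteristic polynomial of Frobenius over $\mathbb{F}_{q^N}$ and Tate's theorem yields an isogeny over $\mathbb{F}_{q^N}$, hence over $F$.

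Now suppose $E$ and $E'$ are ordinary; this is the crucial case. Then $\End(E)\otimes\mathbb{Q}$ and $\End(E')\otimes\mathbb{Q}$ are both isomorphic to a fixed imaginary quadratic field $K$, and I would regard $\pi$ and $\pi'$ as elements of the ring of integers $\mathcal{O}_K$: they are algebraic integers with $N_{K/\mathbb{Q}}(\pi) = N_{K/\mathbb{Q}}(\pi') = q$ whose traces $a_E$ and $a_{E'}$ are coprime to $p$, precisely because $E$ and $E'$ are ordinary. The first step is to observe that the ideals $(\pi)$ and $(\bar\pi)$ of $\mathcal{O}_K$ are coprime: a common prime divisor would divide $\pi+\bar\pi = a_E$ as well as $\pi\bar\pi = q$, hence would divide $(a_E,q) = \mathcal{O}_K$, which is absurd. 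Since $(\pi)(\bar\pi) = (q) = (p)^v$ with $(\pi),(\bar\pi)$ coprime and complex-conjugate, $p$ must split in $K$, say $(p) = \mathfrak{p}\overline{\mathfrak{p}}$ with $\mathfrak{p}\neq\overline{\mathfrak{p}}$, and necessarily $\{(\pi),(\bar\pi)\} = \{\mathfrak{p}^v,\overline{\mathfrak{p}}^v\}$; the same description holds for $\pi'$. Consequently $(\pi)$ equals $(\pi')$ or $(\overline{\pi'})$, so one of $\pi'/\pi$ and $\overline{\pi'}/\pi$ is a unit of $\mathcal{O}_K$, hence a root of unity since the unit group of an imaginary quadratic order is finite. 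Choosing $N\geq 1$ so that the $N$-th power of this root of unity is $1$, we obtain $(\pi')^N = \pi^N$ or $(\pi')^N = (\bar\pi)^N$, so in either case $\pi^N$ and $(\pi')^N$ have the same characteristic polynomial over $\mathbb{Q}$. As $\pi^N$ and $(\pi')^N$ are the $q^N$-power Frobenius endomorphisms of $E$ and $E'$ over $\mathbb{F}_{q^N}$, Tate's isogeny theorem produces an isogeny between $E$ and $E'$ over $\mathbb{F}_{q^N}$, which base-changes to the desired isogeny over $F$.

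The main obstacle is the ordinary case, and the single arithmetic ingredient that makes it work is the coprimality of $(\pi)$ and $(\bar\pi)$, which is exactly where ordinarity ($p\nmid a_E$) enters; the remainder is elementary ideal arithmetic together with Tate's theorem over finite fields. One could instead argue by lifting $E$ and $E'$ to complex-multiplication elliptic curves in characteristic $0$ via Deuring's lifting theorem, using that any two elliptic curves over $\overline{\mathbb{Q}}$ with complex multiplication by orders in the same imaginary quadratic field are isogenous, and reducing the resulting isogeny modulo a suitable prime; this works as well but relies on a heavier body of results than the Frobenius computation above.
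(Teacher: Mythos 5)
Your proof is correct, but it takes a genuinely different route from the paper. The paper disposes of the supersingular case by citing the fact that all supersingular elliptic curves over $F$ form a single isogeny class (\cite[Lemma 42.1.11]{Voight_2021}), and handles the ordinary case by lifting to characteristic $0$ via Deuring's lifting theorem \cite[Chapter 13, Theorem 14]{Lang_1987}, invoking the complex-uniformisation fact that CM curves with CM by orders in the same imaginary quadratic field are isogenous over $\mathbb{C}$, and reducing the resulting isogeny using \cite[Chapter II, Proposition 4.4]{Silverman_book_1994} --- exactly the ``heavier'' alternative you mention in your closing paragraph. Your main argument instead stays entirely in positive characteristic: the Weil-number computation showing that an ordinary Frobenius generates $\mathfrak{p}^v$ or $\overline{\mathfrak{p}}^v$ (with ordinarity entering precisely through $p\nmid a_E$, forcing $(\pi)$ and $(\bar\pi)$ coprime and $p$ split) is sound, the passage to a power killing the root of unity is fine since the unit group is finite, and Tate's isogeny theorem \cite{Tate_1966} then applies over $\mathbb{F}_{q^N}$; the supersingular case via $\pi^N = q^{N/2}$ for suitable $N$ also works. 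What your approach buys is independence from the lifting machinery and from complex uniformisation, at the cost of relying on Tate's theorem and on the standard facts about Weil numbers of ordinary curves; what the paper's approach buys is brevity, since each case reduces to a single citation plus a one-line observation.
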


\begin{proof}
    If $E$ (and therefore $E'$) is supersingular, this is \cite[Lemma 42.1.11]{Voight_2021}.
    
    Otherwise, the proposition follows from \cite[Chapter 13, Theorem 14]{Lang_1987} and \cite[Chapter II, Proposition 4.4]{Silverman_book_1994} since any two elliptic curves over $\mathbb{C}$ with CM by orders in the same imaginary quadratic field are isogenous to each other (as one can easily prove using their complex uniformisations).
\end{proof}

\section{\textsc{Probatio} -- proof of Theorem \ref{thm:main_thm}}\label{sec:apologeticum}

Let $F$ be as in Theorem \ref{thm:main_thm} and let $\mathcal{C} \subseteq \mathbb{A}^2_F$ denote a fixed curve satisfying the hypotheses of Theorem \ref{thm:main_thm}, \emph{i.e.} $\mathcal{C}$ is integral and closed and for all but finitely many points $(x_1,x_2) \in \mathcal{C}(F)$ we have that $x_1$ and $x_2$ are $j$-invariants of elliptic curves with isomorphic endomorphism rings. Recall that $X,Y \in F(\mathcal{C})$ denote the elements in the function field of $\mathcal{C}$ induced by the two coordinate projections $\mathcal{C} \to \mathbb{A}^1_F$.

There exists a finite field $\mathbb{F}_q \subset F$ and an integral closed curve $\mathcal{C}_0 \subset \mathbb{A}^2_{\mathbb{F}_q}$ such that $\mathcal{C} = (\mathcal{C}_0)_F$. We obtain a field embedding $\mathbb{F}_q(\mathcal{C}_0) \hookrightarrow F(\mathcal{C})$ and we identify $\mathbb{F}_q(\mathcal{C}_0)$ with its image under this embedding. With this convention, we then have that $X,Y \in \mathbb{F}_q(\mathcal{C}_0)$. We now fix a choice of two elliptic curves $E'_1$ and $E'_2$ defined over $\mathbb{F}_q(\mathcal{C}_0)$ with $j$-invariants $X$ and $Y$ respectively.

For $i=1,2$, we denote by $E'_i[\ell]$ the kernel of multiplication by $\ell$ on $E'_i$ where $\ell \neq p$ is an arbitrary fixed odd prime. We choose a finite extension $K$ of $\mathbb{F}_q(\mathcal{C}_0)$ over which every point of $E'_i[\ell]$ is rational and we fix a separable closure $K^{\mathrm{sep}}$ of $K$. We set $E_i = (E'_i)_K$ for $i = 1,2$.

\begin{prop}\label{prop:reduction}
In the setting and under the hypotheses described in this section so far, the elliptic curves $E_1$ and $E_2$ are geometrically isogenous.
\end{prop}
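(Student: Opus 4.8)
The plan is to show that $E_1$ and $E_2$ become isogenous over $\overline{K}$ by exploiting the CM hypothesis on the curve $\mathcal{C}$ together with a specialisation argument. First I would use the fact that for all but finitely many closed points $Q$ of $\mathcal{C}_0$ (equivalently, all but finitely many $F$-points after base change), the values $X(Q), Y(Q) \in F$ are $j$-invariants of elliptic curves with isomorphic endomorphism rings; by Proposition~\ref{prop:isogenous}, this means that the reductions $(E_1)_Q$ and $(E_2)_Q$ are isogenous over $F$. The goal is to promote this pointwise isogeny to a generic one. The natural tool here is $\ell$-adic: the isogeny between the reductions implies that the $\ell$-adic Tate modules (or the $\ell$-adic Galois representations attached to $E_1$ and $E_2$) have, after specialisation, the same Frobenius characteristic polynomials for a density-one set of closed points, hence (by Faltings/Tate over the global function field $K$, or by a direct Chebotarev-plus-Brauer--Nesbitt argument) the two $\ell$-adic representations of $\mathrm{Gal}(K^{\mathrm{sep}}/K)$ attached to $E_1$ and $E_2$ are isomorphic, which by the function-field analogue of the isogeny theorem yields a $\overline{K}$-isogeny $E_1 \sim E_2$.

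Concretely, I would carry out the steps as follows. \textbf{Step 1:} Spread out — choose a smooth affine model of (an open subset of) $\mathcal{C}_0$ over which $E_1$ and $E_2$ extend to abelian schemes (elliptic curves) $\mathcal{E}_1, \mathcal{E}_2$ with good reduction, discarding the finitely many bad points. \textbf{Step 2:} For each closed point $Q$ in this locus, the fibres $(\mathcal{E}_1)_Q$ and $(\mathcal{E}_2)_Q$ are elliptic curves over a finite field whose $j$-invariants lie in $F$ and have isomorphic endomorphism rings (this is exactly the hypothesis, applied to the $F$-points above $Q$, using Proposition~\ref{prop:Frobenius} to handle Frobenius conjugates so that the hypothesis is insensitive to the choice of $F$-point over $Q$); by Proposition~\ref{prop:isogenous} they are $F$-isogenous, hence have equal zeta functions, hence the geometric Frobenius at $Q$ acts with the same characteristic polynomial on $V_\ell(\mathcal{E}_1)$ and $V_\ell(\mathcal{E}_2)$. \textbf{Step 3:} Since the set of such $Q$ has density one, the two semisimple $\ell$-adic representations $\rho_{1}, \rho_2 \colon \pi_1^{\mathrm{geom}} \rtimes \ldots \to \GL_2(\mathbb{Q}_\ell)$ (more precisely the representations of $\mathrm{Gal}(K^{\mathrm{sep}}/K)$ on $V_\ell E_i$) have equal traces on a dense set of Frobenii, so by Chebotarev density together with Brauer--Nesbitt they are isomorphic. \textbf{Step 4:} Apply the Tate/Faltings isogeny theorem over the function field $K$ (due to Tate, Zarhin, and Mori in the function-field case) to conclude that $E_1$ and $E_2$ are isogenous over $K$, hence a fortiori geometrically isogenous.

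The main obstacle I anticipate is \textbf{Step 2}: one must be careful that the hypothesis of Theorem~\ref{thm:main_thm} concerns $F$-points $(x_1,x_2)\in\mathcal{C}(F)$, whereas a closed point $Q$ of $\mathcal{C}_0$ corresponds to a full $\mathrm{Gal}(F/\mathbb{F}_q)$-orbit of such $F$-points, and the residue field $\mathbb{F}_q(\mathcal{C}_0)(Q)$ over which the reductions live may be strictly larger than the prime field — so "isomorphic endomorphism rings" must be interpreted after passing to $F$, and one needs Proposition~\ref{prop:Frobenius} (applied iteratively) to see that this property is constant along the Frobenius orbit and therefore genuinely a property of the closed point $Q$. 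A secondary subtlety is ensuring the excluded finite set of points of $\mathcal{C}$ (bad reduction, supersingular fibres, the finitely many exceptional points in the hypothesis) has density zero, which is automatic since it is finite. Once these bookkeeping points are handled, Steps 3 and 4 are standard, so the real content is the reduction of the geometric CM hypothesis to equality of Frobenius traces at almost all places.
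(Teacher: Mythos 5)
Your overall strategy coincides with the paper's: pass to the $\ell$-adic representations, show the Frobenius traces agree at almost all places via the pointwise CM hypothesis, conclude by Chebotarev plus Brauer--Nesbitt that the representations are isomorphic, and invoke the isogeny theorem over the function field. Steps 1, 3 and 4 are exactly what the paper does (with the isogeny theorem quoted from Moret-Bailly/Zarhin). However, there is a genuine gap in your Step 2: the inference ``they are $F$-isogenous, hence have equal zeta functions'' is false as stated. An isogeny defined only over $F=\overline{k_{\mathfrak p}}$ does not control the action of the Frobenius of the residue field $k_{\mathfrak p}$; for instance, an elliptic curve over a finite field and its quadratic twist are geometrically isomorphic, yet their Frobenius traces differ in sign, so their zeta functions differ. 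Equality of zeta functions is equivalent (by Tate's theorem over finite fields) to the existence of an isogeny over $k_{\mathfrak p}$ itself, and that is precisely what your argument does not yet supply.

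The paper closes this gap using the hypothesis, built into the setup of the section, that $K$ was enlarged so that all of $E_i[\ell]$ is rational over $K$ for a fixed odd prime $\ell\neq p$. Then the reductions modulo $\mathfrak p\notin S$ retain full rational $\ell$-torsion over $k_{\mathfrak p}$ (Milne, Proposition 20.7), and Silverberg's theorem \cite[Theorem 2.4]{Silverberg_1992} guarantees that two abelian varieties over $k_{\mathfrak p}$ with full rational $n$-torsion for some $n\geq 3$ that are geometrically isogenous are already isogenous over $k_{\mathfrak p}$. Only after this descent step can one compare characteristic polynomials of Frobenius on the two Tate modules. You correctly identified the bookkeeping issue about $F$-points versus closed points, but the twist/descent issue is the one that actually requires a new input, and your proposal neither flags it nor uses the rationality of the $\ell$-torsion that the setup provides for exactly this purpose. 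The remainder of your argument goes through once this is repaired.
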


\begin{proof}
Let $T_\ell(E_i)$ denote the $\ell$-adic Tate module of $E_i$ over $K$ ($i = 1,2$). Fixing isomorphisms $T_\ell(E_i) \simeq \mathbb{Z}_\ell^2$, we obtain continuous representations $\rho_i: \mathrm{Gal}(K^{\mathrm{sep}}/K) \to \mathrm{GL}(T_\ell(E_i) \otimes_{\mathbb{Z}_\ell} \mathbb{Q}_\ell) \simeq \mathrm{GL}_2(\mathbb{Q}_\ell)$ for $i = 1,2$.

It follows from the fact that $\mathcal{C}$ satisfies the hypothesis of Theorem \ref{thm:main_thm} that for all primes $\mathfrak{p}$ of $K$ outside a finite set $S$, the elliptic curves $E_1$ and $E_2$ have good reduction modulo $\mathfrak{p}$ and the reductions of $E_1$ and $E_2$ modulo $\mathfrak{p}$ have isomorphic geometric endomorphism rings and are therefore geometrically isogenous by Proposition \ref{prop:isogenous}. Since $E_1$ and $E_2$ have full $\ell$-torsion over $K$, their reductions modulo a prime $\mathfrak{p} \not\in S$ have full $\ell$-torsion over the residue field $k_{\mathfrak{p}}$ of $\mathfrak{p}$ because of \cite[Proposition 20.7]{MilneAV}. By \cite[Theorem 2.4]{Silverberg_1992}, the reductions are therefore actually isogenous, not only geometrically isogenous.

For each prime $\mathfrak{p} \not\in S$ and each prime $\mathfrak{P}$ of $K^{\mathrm{sep}}$ lying above $\mathfrak{p}$, let $D(\mathfrak{P}|\mathfrak{p}) \subset \mathrm{Gal}(K^{\mathrm{sep}}/K)$ denote the corresponding decomposition group and fix an element $\mathrm{Frob}_{\mathfrak{P}} \in D(\mathfrak{P}|\mathfrak{p})$ that reduces to the Frobenius relative to $k_{\mathfrak{p}}$. Since \cite[Proposition 20.7]{MilneAV} implies that reduction modulo $\mathfrak{P}$ induces a group isomorphism from the $\ell^n$-torsion of $(E_i)_{K^{\mathrm{sep}}}$ to the $\ell^n$-torsion of the reduction of $(E_i)_{K^{\mathrm{sep}}}$ modulo $\mathfrak{P}$ for every $n \in \mathbb{N}$ and $i = 1,2$, it then follows from \cite[Theorem 1(c)]{Tate_1966} that
\[ \mathrm{Tr}(\rho_1(\mathrm{Frob}_{\mathfrak{P}})) = \mathrm{Tr}(\rho_2(\mathrm{Frob}_{\mathfrak{P}})),\]
where $\mathrm{Tr}$ denotes the matrix trace.

By the Chebotarev density theorem for function fields \cite[Theorem 9.13A]{Rosen_2002}, the set of all $\mathrm{Frob}_{\mathfrak{P}}$ as in the last paragraph for varying $\mathfrak{p} \not\in S$ and $\mathfrak{P}$ lying above $\mathfrak{p}$ is dense in $\mathrm{Gal}(K^{\mathrm{sep}}/K)$. Since $\rho_1$ and $\rho_2$ are continuous, it follows that $\mathrm{Tr}(\rho_1(\sigma)) = \mathrm{Tr}(\rho_2(\sigma))$ for all $\sigma \in \mathrm{Gal}(K^{\mathrm{sep}}/K)$.

We now deduce from \cite[Chapitre XII, Th\'eor\`eme 2.5~(ii)]{Moret_Bailly_Asterisque} and \cite[Chapter XVII, Corollary 3.8]{Lang_Algebra} that the representations $\rho_1$ and $\rho_2$ are isomorphic. It then follows from \cite[Chapitre XII, Th\'eor\`eme 2.5~(i)]{Moret_Bailly_Asterisque} that $E_1$ and $E_2$ are geometrically isogenous.
\end{proof}

To finish the proof of Theorem \ref{thm:main_thm}, we will need the following lemma on specialisation of isogenies between elliptic curves. This is certainly well-known to the experts, but we include a proof for lack of a suitable reference.

\begin{lem}\label{lem:isogenyspecialisation}
Let $\mathcal{D}$ be an integral curve over an algebraically closed field $L$ of characteristic $p \geq 0$. Fix an algebraic closure $\overline{L(\mathcal{D})}$ of $L(\mathcal{D})$. Let $(E_1)_{/\overline{L(\mathcal{D})}}$ and $(E_2)_{/\overline{L(\mathcal{D})}}$ be two elliptic curves whose $j$-invariants $j_1$ and $j_2$ belong to $L(\mathcal{D})$ and suppose that there exists a cyclic isogeny $\chi: E_1 \to E_2$ of degree $d$ such that $p \nmid d$. Then for all but finitely many points $x \in \mathcal{D}(L)$, the curve $\mathcal{D}$
 is smooth at $x$, the point $x$ is a pole of neither $j_1$ nor $j_2$, and there exists a cyclic isogeny of degree $d$ from an elliptic curve $\left(E_{j_1(x)}\right)_{/L}$ with $j$-invariant $j_1(x)$ to an elliptic curve $\left(E_{j_2(x)}\right)_{/L}$ with $j$-invariant $j_2(x)$.
\end{lem}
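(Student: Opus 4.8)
The plan is a spreading-out argument carried out over a suitable finite cover of $\mathcal D$ on which the isogeny, together with a trivialisation of its kernel, becomes defined. As a first reduction, note that $\mathcal D$ is smooth away from a finite set and that $j_1,j_2\in L(\mathcal D)$ have only finitely many poles; so, fixing a non-empty affine open $U\subseteq\mathcal D$ that is smooth and on which $j_1$ and $j_2$ are both regular, it is enough to produce, for all but finitely many $x\in U(L)$, a cyclic $d$-isogeny between elliptic curves over $L$ with $j$-invariants $j_1(x)$ and $j_2(x)$.

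Since $j_1,j_2\in L(\mathcal D)$, we may choose elliptic curves $E_1^{0},E_2^{0}$ over $L(\mathcal D)$ with $j(E_i^{0})=j_i$; over the algebraically closed field $\overline{L(\mathcal D)}$ one has $E_i\cong(E_i^{0})_{\overline{L(\mathcal D)}}$, so $\chi$ transports to a cyclic $d$-isogeny between $(E_1^{0})_{\overline{L(\mathcal D)}}$ and $(E_2^{0})_{\overline{L(\mathcal D)}}$. This isogeny, and an isomorphism from its kernel to $\mathbb Z/d\mathbb Z$, are defined over a finite subextension $M$ of $\overline{L(\mathcal D)}/L(\mathcal D)$. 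Let $\pi\colon\mathcal D'\to\mathcal D$ be the normalisation of $\mathcal D$ in $M$; it is a finite surjective morphism of integral curves, $\mathcal D'$ is smooth over $L$ (as $L$ is perfect), and $L(\mathcal D')=M$. After base change to $M$ we have elliptic curves $\widetilde E_1,\widetilde E_2$ over $L(\mathcal D')$ with $j(\widetilde E_i)=j_i\circ\pi$, a cyclic $d$-isogeny $\widetilde\chi\colon\widetilde E_1\to\widetilde E_2$, and an isomorphism $\ker\widetilde\chi\cong(\mathbb Z/d\mathbb Z)_{L(\mathcal D')}$.

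Next I would spread out: after shrinking to a non-empty affine open $U'\subseteq\pi^{-1}(U)$, the curves $\widetilde E_1,\widetilde E_2$ extend to families of elliptic curves $\mathcal E_1,\mathcal E_2\to U'$ with $j(\mathcal E_i)=j_i\circ\pi|_{U'}$, the isogeny extends to a $U'$-homomorphism $\mathcal E_1\to\mathcal E_2$ whose kernel — finite étale since $p\nmid d$ — is identified with $(\mathbb Z/d\mathbb Z)_{U'}$; here one uses only that Weierstrass models, morphisms and isomorphisms of schemes of finite type all spread out from the generic point. For $x'\in U'(L)$ the fibre is then a cyclic $d$-isogeny $(\mathcal E_1)_{x'}\to(\mathcal E_2)_{x'}$ of elliptic curves over $L$ with $j\big((\mathcal E_i)_{x'}\big)=(j_i\circ\pi)(x')=j_i(\pi(x'))$. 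Finally, since $\pi$ is finite and surjective, $L$ is algebraically closed, and $\pi^{-1}(U)\setminus U'$ is finite, every $x\in U(L)$ outside a finite set admits a preimage $x'\in U'(L)$; for such $x$ the curve $\mathcal D$ is smooth at $x$, $x$ is a pole of neither $j_1$ nor $j_2$, and the fibre at $x'$ gives the desired cyclic $d$-isogeny between elliptic curves with $j$-invariants $j_1(x)$ and $j_2(x)$, which is exactly the assertion of the lemma.

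The individual steps are all routine; the one point requiring a little care is keeping the kernel of the isogeny \emph{cyclic} throughout, which is why in the descent step one should trivialise $\ker\chi$ over the cover $\mathcal D'$ and not merely descend the isogeny itself. (Alternatively, one can run the argument through the modular curve $Y_0(d)_{/L}$, which for $p\nmid d$ is a smooth affine curve finite over the $j$-line: $\chi$ determines a point of $Y_0(d)(\overline{L(\mathcal D)})$ lying over $(j_1,j_2)\in\mathbb A^2(L(\mathcal D))$, finiteness of $Y_0(d)\to\mathbb A^2$ forces this point to be defined over a finite extension, and one specialises using that $L$-points of the coarse space $Y_0(d)$ correspond to isomorphism classes of cyclic $d$-isogenies over $L$, the extra automorphisms at $j=0,1728$ causing no trouble over the algebraically closed field $L$.)
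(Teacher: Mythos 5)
Your argument is correct and follows essentially the same route as the paper: pass to a finite cover of $\mathcal{D}$ over which the isogeny and a trivialisation of its kernel are defined, spread out to families of elliptic curves over an open subscheme, use $p\nmid d$ to see the kernel is finite \'etale and hence constant (so the specialised kernels stay cyclic), and descend via cofiniteness of the image of the cover. The only cosmetic difference is that the paper trivialises the full $d$-torsion and realises $\ker\Psi$ as a constant closed subgroup scheme of it, whereas you trivialise $\ker\chi$ directly; both work.
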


\begin{proof}
The elliptic curves $E_1$ and $E_2$, the isogeny $\chi$, and the $d$-torsion of $E_1$ are all defined over a finite field extension of $L(\mathcal{D})$. This finite field extension is the function field $L(\widetilde{\mathcal{D}})$ of a smooth irreducible curve $\widetilde{\mathcal{D}}_{/L}$, which we choose in such a way that the inclusion $L(\mathcal{D}) \subseteq L(\widetilde{\mathcal{D}})$ induces a non-constant morphism $\widetilde{\mathcal{D}} \to \mathcal{D}$. In the following, we will identify $L(\widetilde{\mathcal{D}})$ with a subfield of $\overline{L(\mathcal{D})}$.

It follows from \cite[Theorem 1.4/3, Lemma 7.3/1, and Proposition 7.3/6]{BLR} that, after maybe replacing $\widetilde{\mathcal{D}}$ by a non-empty open subscheme, there exist two elliptic schemes $\mathcal{E}_1$ and $\mathcal{E}_2$ over $\widetilde{\mathcal{D}}$ and a homomorphism $\Psi: \mathcal{E}_1 \to \mathcal{E}_2$ of group schemes over $\widetilde{\mathcal{D}}$ such that $(\mathcal{E}_i)_{\overline{L(\mathcal{D})}} = E_i$ for $i = 1,2$, $\Psi_{\overline{L(\mathcal{D})}} = \chi$, $(\ker \chi)(\overline{L(\mathcal{D})}) = (\ker \Psi)(L(\widetilde{\mathcal{D}}))$, and the restriction of $\Psi$ to each fiber is finite, flat, and surjective. Note that the two morphisms from $\widetilde{\mathcal{D}}$ to the $j$-line 
$\mathbb{A}^1_L$ induced by the $j$-invariants of $E_1$ and $E_2$ factor as the compositions of $\widetilde{\mathcal{D}} \to \mathcal{D}$ with the respective rational map $j_i: \mathcal{D} \dashrightarrow \mathbb{A}^1_L$.

By \cite[Corollary 14.27]{GoertzWedhorn}, $\Psi$ is flat and so $\ker \Psi$ is flat over $\widetilde{\mathcal{D}}$. In particular, all its irreducible components dominate $\widetilde{\mathcal{D}}$ and are $1$-dimensional by \cite[Propositions 14.14 and 14.109~(2)]{GoertzWedhorn}. Let $[d]: \mathcal{E}_1 \to \mathcal{E}_1$ denote multiplication by $d$. By the N\'eron mapping property, $\ker \Psi$ is a closed subscheme of $\ker [d]$ since the restriction of $[d]$ to the generic fiber factors through the corresponding restriction of $\Psi$. Furthermore, $\ker [d] \to \widetilde{\mathcal{D}}$ is finite and \'etale by \cite[Proposition 20.7]{MilneAV} since $p \nmid d$. By our choice of $\widetilde{\mathcal{D}}$, the absolute Galois group of $L(\widetilde{\mathcal{D}})$ acts trivially on the generic fiber of $\ker [d]$ and it follows from \cite[Chapter I, Theorem 5.3]{Milne_book_1980} that $\ker [d]$ is a constant group scheme over $\widetilde{D}$. We deduce that $\ker \Psi$ is a constant group scheme over $\widetilde{D}$ as well.

It follows that for all $y \in \widetilde{\mathcal{D}}(L)$, there is a cyclic isogeny $\Psi_y: (\mathcal{E}_1)_y \to (\mathcal{E}_2)_y$ of degree $d$. Since the image of the non-constant morphism $\widetilde{\mathcal{D}} \to \mathcal{D}$ is cofinite in $\mathcal{D}$ by \cite[Proposition 15.4~(1)]{GoertzWedhorn}, since 
$\mathcal{D}$ contains a smooth open dense subscheme by \cite[Theorem 6.19]{GoertzWedhorn}, and since $j_1$ and $j_2$ both have at most finitely many poles on this open dense subscheme, this implies that for all but finitely many $x \in \mathcal{D}(L)$, the curve $\mathcal{D}$ is smooth at $x$, the point $x$ is a pole of neither $j_1$ nor $j_2$, and there is a cyclic isogeny of degree $d$ from an elliptic curve over $L$ with $j$-invariant $j_1(x)$ to an elliptic curve over $L$ with $j$-invariant $j_2(x)$.
\end{proof}

Because of Proposition \ref{prop:reduction}, the following theorem will imply Theorem \ref{thm:main_thm}.

\begin{thm}
    Recall that $F$ and $\mathcal{C}$ are as in the hypotheses of Theorem \ref{thm:main_thm} and that $X,Y \in F(\mathcal{C})$ denote the elements in the function field of $\mathcal{C}$ induced by the two coordinate projections $\mathcal{C} \to \mathbb{A}^1_F$. Suppose that some elliptic curve over $F(\mathcal{C})$ with $j$-invariant $X$ is geometrically isogenous to an elliptic curve over $F(\mathcal{C})$ with $j$-invariant $Y$. Then there exists $n \in \mathbb{Z}_{\geq 0}$ such that either $X = Y^{p^n}$ or $Y = X^{p^n}$.
\end{thm}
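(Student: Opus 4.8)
The plan is to exploit the structure of isogenies in characteristic $p$ to reduce the given geometric isogeny to a \emph{cyclic} isogeny of prime-to-$p$ degree between suitable Frobenius twists of $E_1$ and $E_2$, to specialise this isogeny over the points of $\mathcal{C}$ using Lemma~\ref{lem:isogenyspecialisation}, and then to show that the hypothesis on $\mathcal{C}$ forces its degree to be $1$. First we would note that $X$ and $Y$ are non-constant: a constant coordinate would make one of $x_1,x_2$ take a fixed value on cofinitely many points of $\mathcal{C}(F)$ while the other ranges over a cofinite subset of $F$, which is incompatible with the endomorphism-ring condition (infinitely many distinct orders would occur on one side only). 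In particular $X$ is transcendental over $\mathbb{F}_p$, so $E_1$, $E_2$ and all their Frobenius twists are ordinary. Fix a geometric isogeny $\phi$ between $E_1$ and $E_2$ over $\Omega:=\overline{F(\mathcal{C})}$. Since $E_1$ is ordinary, the inseparable degrees of $\phi$ and of its dual $\hat\phi$ multiply to $p^{v_p(\deg\phi)}$; after possibly replacing $\phi$ by $\hat\phi$ and interchanging the roles of $X$ and $Y$, we may assume the inseparable degree of $\phi$ is at least that of $\hat\phi$. Writing $\phi=\phi_{\mathrm{sep}}\circ F^{a_1}$ with $F^{a_1}\colon E_1\to E_1^{(p^{a_1})}$ the $a_1$-fold relative Frobenius and $\phi_{\mathrm{sep}}$ separable, then peeling off the (necessarily cyclic, étale) $p$-primary part of $\ker\phi_{\mathrm{sep}}$ as an $a_2$-fold Verschiebung $E_1^{(p^{a_1})}\to E_1^{(p^{a_1-a_2})}$ (here $a_1\ge a_2$ by our normalisation), and finally dividing out the largest multiplication map dividing what remains, we arrive at a cyclic isogeny $E_1^{(p^{a})}\to E_2$ over $\Omega$ of some degree $d$ with $p\nmid d$, where $a:=a_1-a_2\ge 0$. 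Its source and target have $j$-invariants $X^{p^{a}}$ and $Y$, both of which lie in $F(\mathcal{C})$, and it suffices to prove $d=1$: then $E_1^{(p^{a})}\cong E_2$, so $X^{p^{a}}=Y$.

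Next, we would apply Lemma~\ref{lem:isogenyspecialisation} with $L=F$, $\mathcal{D}=\mathcal{C}$, $j_1=X^{p^{a}}$, $j_2=Y$: for all but finitely many $x\in\mathcal{C}(F)$ there is a cyclic isogeny of degree $d$ between an elliptic curve over $F$ with $j$-invariant $X(x)^{p^{a}}$ and one with $j$-invariant $Y(x)$. Combining the hypothesis on $\mathcal{C}$ with Proposition~\ref{prop:Frobenius} (iterated $a$ times), and discarding the finitely many $x$ for which $X(x)$ — equivalently, by the endomorphism-ring condition, $Y(x)$ — is supersingular, we find that for all but finitely many $x\in\mathcal{C}(F)$ the two curves in question are ordinary, have the same CM order $\mathcal{O}_{D_x}$ of some discriminant $D_x$, and are linked by a \emph{horizontal} cyclic isogeny of degree $d$.

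Now suppose for contradiction that $d>1$, and fix a prime $\ell\mid d$ with $\ell\neq p$. A horizontal cyclic isogeny of degree $d$ between ordinary elliptic curves over $F$ with CM by $\mathcal{O}_D$ corresponds, by the theory of complex multiplication for ordinary curves over finite fields, to a primitive invertible $\mathcal{O}_D$-ideal of norm $d$; since $\ell\mid d$, the existence of such an ideal forces $\ell$ to be non-inert in $\mathcal{O}_D$ (if $\ell$ were inert, the only $\mathcal{O}_D$-ideals of $\ell$-power norm would be powers of $\ell\mathcal{O}_D$, which are not primitive). Hence $\bigl(\tfrac{D_x}{\ell}\bigr)\neq -1$ for all but finitely many $x\in\mathcal{C}(F)$. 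On the other hand, since $X$ is non-constant its image on $\mathcal{C}(F)$ is cofinite in $F$, and for each discriminant $D$ with $p$ split in $\mathcal{O}_D$ the $j$-invariants of ordinary curves over $F$ with CM by $\mathcal{O}_D$ form a non-empty finite set; therefore $D_x$ runs, with at most finitely many exceptions, through every $D$ with $p$ split in $\mathcal{O}_D$. Among those there are infinitely many $D$ with $\bigl(\tfrac{D}{\ell}\bigr)=-1$, because "$p$ splits in $\mathcal{O}_D$" and "$\ell$ is inert in $\mathcal{O}_D$" are independent congruence conditions on $D$ (this is where $\ell\neq p$ is essential). This contradicts the previous sentence, so $d=1$, and hence $Y=X^{p^{a}}$ (or $X=Y^{p^{a}}$, if $\phi$ was replaced by $\hat\phi$ above), which is the assertion of the theorem.

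The main difficulty will be the last two paragraphs: everything comes down to showing that a fixed modular degree $d>1$ is incompatible with the endomorphism-ring condition holding at \emph{cofinitely many} points of $\mathcal{C}$. This needs both the careful bookkeeping of Frobenius and Verschiebung factors in the reduction step and the input from complex multiplication that a fixed $d>1$ cannot be the degree of a horizontal isogeny for all but finitely many of the CM orders that occur over $F$. It is precisely the strength of the hypothesis — "all but finitely many" rather than "infinitely many" — that makes this work, consistently with the remark in the introduction that the weaker hypothesis would also allow curves cut out by modular polynomials.
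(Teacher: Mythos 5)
Your proposal is correct and follows essentially the same route as the paper: reduce the given geometric isogeny to a cyclic prime-to-$p$ isogeny between a Frobenius twist of $E_1$ and $E_2$ (the paper cites Pazuki--Griffon for this decomposition, whereas you carry out the Frobenius/Verschiebung bookkeeping by hand), specialise it via Lemma~\ref{lem:isogenyspecialisation}, and rule out $d>1$ by exhibiting infinitely many imaginary quadratic orders with $p$ split and a prime $\ell \mid d$ inert that occur as CM orders of specialisations, where a cyclic degree-$d$ horizontal isogeny cannot exist. The paper delegates this final incompatibility to Proposition~5.1 of Campagna--Dill and selects its discriminants (prime, larger than $d$) via Dirichlet's theorem, while you argue it directly through the correspondence with invertible ideals of norm $d$ --- for which you should note that the occurring discriminants can be taken fundamental and coprime to $d$ --- but the underlying mechanism is identical.
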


\begin{proof}
Fix an algebraic closure $\overline{F(\mathcal{C})}$ of $F(\mathcal{C})$ and let $(E_1)_{/\overline{F(\mathcal{C})}}$ and $(E_2)_{/\overline{F(\mathcal{C})}}$ denote elliptic curves with $j$-invariants $X$ and $Y$ respectively. By hypothesis, there exists an isogeny $\varphi: E_1 \to E_2$. Choosing $\varphi$ such that $\deg \varphi$ is minimal among the degrees of all isogenies from $E_1$ to $E_2$, we can assume without loss of generality that there exists no decomposition $\varphi = \psi \circ [N]$ where $[N]$ denotes the multiplication by $N > 1$ on $E_1$ and $\psi$ is another isogeny from $E_1$ to $E_2$. In particular, $(\ker \varphi)(\overline{F(\mathcal{C})})$ is cyclic.

Our hypothesis on $\mathcal{C}$ implies that neither $E_1$ nor $E_2$ is isotrivial, \emph{i.e.} that neither $j(E_1)$ nor $j(E_2)$ belongs to $F$. By \cite[Proposition 4.8]{Pazuki_Griffon}, one among $\varphi$ and its dual $\widehat{\varphi}$ is separable since $\varphi$ does not factor via any multiplication-by-$N$ map for $N > 1$. In particular, by \cite[Proposition 4.7]{Pazuki_Griffon}, we can assume, up to switching $X$ and $Y$ and using $\widehat{\varphi}$ instead of $\varphi$, that there exists a cyclic isogeny $\chi: E_{1,n} \to E_2$ which is separable with separable dual where $(E_{1,n})_{/\overline{F(\mathcal{C})}}$ denotes an elliptic curve with $j$-invariant $X^{p^n}$. By \cite[Lemma 4.5]{Pazuki_Griffon}, the isogeny $\chi$ has degree coprime to $p$.

Set $d = \deg \chi$. If $d = 1$, then $\chi$ is an isomorphism, so $X^{p^n} = Y$ and we are done. Hence, we assume from now on that $d > 1$ and we aim to obtain a contradiction.

Since $d > 1$, there exists some prime $\ell$ dividing $d$. We have $\ell \neq p$ since $p \nmid d$. By Dirichlet's theorem on primes in arithmetic progressions, there exist infinitely many discriminants $-D < -d$ such that
\begin{itemize}
    \item $D$ is prime (in particular, $-D \equiv 1 \text{ mod } 4$),
    \item $\ell$ is inert in $\mathbb{Q}(\sqrt{-D})$, and
    \item $p$ is split in $\mathbb{Q}(\sqrt{-D})$.
\end{itemize}
The third condition together with \cite[Chapter 13, Theorem 12]{Lang_1987} ensures that, for each such discriminant $-D$, there exists some $x \in F$ with CM by the quadratic order of discriminant $-D$. 

From this together with \cite[Proposition 15.4~(1)]{GoertzWedhorn}, applied to the first coordinate projection $\mathcal{C} \to \mathbb{A}^1_F$ (which is non-constant), we now deduce that, for any discriminant $-D$ as above with $D > 0$ sufficiently large, there exists a point $(x_1,x_2) \in \mathcal{C}(F)$ such that $x_1$ has CM by the quadratic order of discriminant $-D$. After making $D$ even larger if necessary, we can also assume, thanks to Lemma \ref{lem:isogenyspecialisation} and the property of the curve $\mathcal{C}$ in the statement of the theorem, that
\begin{itemize}
    \item $x_2$ has CM by the quadratic order of discriminant $-D$ as well and
    \item there is an isogeny of degree $d$ with cyclic kernel from an elliptic curve over $F$ with $j$-invariant $x_1^{p^n}$ to an elliptic curve over $F$ with $j$-invariant $x_2$.
\end{itemize}

Proposition \ref{prop:Frobenius} then implies that also the elliptic curve over $F$ with $j$-invariant $x_1^{p^n}$ has CM by the maximal order in $\mathbb{Q}(\sqrt{-D})$. Hence, there exists an isogeny of degree $d$ with cyclic kernel between two elliptic curves over $F$ with CM by the quadratic order of discriminant $-D$, where $d$ and $-Dp$ are coprime and $d$ is divisible by the prime $\ell$, which is inert in $\mathbb{Q}(\sqrt{-D})$. This however contradicts \cite[Proposition 5.1]{Campagna_Dill} and we are done.
\end{proof}

\section{\textsc{Peroratio}} \label{sec:final section}

We begin this short final section by proving Theorem \ref{thm:main_theorem}.

\begin{proof}[Proof of Theorem \ref{thm:main_theorem}]
Let $\mathcal{D} \subseteq \mathbb{A}^2_F$ be the schematic image of the morphism $\varphi: \mathcal{C} \to \mathbb{A}^2_F$ defined by the pair $(A,B)$. Then $\mathcal{D}$ is a curve that is irreducible (because it is the schematic image of an irreducible curve) and reduced by \cite[Remark 10.32]{GoertzWedhorn}. Thus, $\mathcal{D}$ is integral and in order to conclude we only have to prove that $\mathcal{D}$ satisfies the main hypothesis of Theorem \ref{thm:main_thm}, \textit{i.e.} that all but finitely many points in $\mathcal{D}(F)$ have coordinates that are $j$-invariants of elliptic curves with isomorphic endomorphism rings.

By \cite[Proposition 15.4~(1)]{GoertzWedhorn} applied to the non-constant morphism $\varphi$, the set $\mathcal{D}(F) \setminus \varphi(\mathcal{C}(F))$ is finite. By hypothesis, there exists some $D_0 \in \mathbb{D}$ such that the implication ``$\mathfrak{p} \mid H_D(A) \Rightarrow \mathfrak{p} \mid H_D(B)$" holds for all $D < D_0$ and all prime ideals $\mathfrak{p}$ of $R$. Let $\mathcal{S} \subseteq \mathcal{C}(F)$ be the subset of points $Q \in \mathcal{C}(F)$ such that $A(Q)$ is ordinary and $H_D(A(Q)) \neq 0$ for all the finitely many $D \in \mathbb{D}$ with $D \geq D_0$. Since $A \in R \setminus F$ and all but finitely many elements of $F$ are ordinary, the set $\mathcal{C}(F) \setminus \mathcal{S}$ is finite and so also $\mathcal{D}(F) \setminus \varphi(\mathcal{S})$ is finite.

Let now $(x,y)$ be a point in $\varphi(\mathcal{S})$. By \cite[Chapter 13, Theorem 14]{Lang_1987}, there exists $D \in \mathbb{D}$ such that $H_D(x) = 0$. Since $(x,y) \in \varphi(\mathcal{S})$, there is a point $Q \in \mathcal{S}$ such that $\varphi(Q) = (x,y)$ and hence $A(Q) = x$. In particular, $x$ is ordinary. Moreover, it follows that $H_D(A(Q)) = 0$, which, by definition of $\mathcal{S}$, implies that $D < D_0$. Therefore, we can apply the hypothesis to deduce that also $H_D(y) = H_D(B(Q)) = 0$. Because of the irreducibility of $H_D(T) \in \mathbb{Z}[T]$, all zeroes of $H_D$ in $\overline{\mathbb{Q}}$ have CM by the imaginary quadratic order of discriminant $D$. Combined with \cite[Chapter 13, Theorem 12]{Lang_1987} and the fact that $H_D(x) = H_D(y) = 0$, this implies that $y$ has CM by the same order as $x$.

Since $\mathcal{D}(F) \setminus \varphi(\mathcal{S})$ is finite, this shows that the curve $\mathcal{D}$ satisfies the main hypothesis of Theorem \ref{thm:main_thm} and we are done.

\end{proof}

We conclude the article by showing that ``all but finitely many" cannot be replaced by ``infinitely many" in the hypothesis of Theorem \ref{thm:main_thm}.

\begin{prop}\label{prop:infinitelymany}
Let $F$ be as in Theorem \ref{thm:main_thm}. For any integral closed curve $\mathcal{C} \subseteq \mathbb{A}^2_F$ that is neither a vertical nor a horizontal line, there are infinitely many points $(x,y) \in \mathcal{C}(F)$ for which $x$ and $y$ both have the same multiplicative order and such that $x$ and $y$ are $j$-invariants of elliptic curves with isomorphic endomorphism rings.
\end{prop}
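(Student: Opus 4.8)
The plan is to produce, for every sufficiently large integer $m$, a large collection of points $Q \in \mathcal{C}(F)$ at which $X(Q) = Y(Q)^{p^m}$ and $X(Q), Y(Q) \in F^{\ast}$; every such point automatically has both of the required properties — the multiplicative order and the isomorphism class of the endomorphism ring of a CM $j$-invariant are unchanged under $z \mapsto z^{p^m}$ — and a lower bound on the number of such points that grows with $m$ then forces the set of all such points to be infinite. The essential tool is the $ABC$ theorem for function fields, which is exactly what guarantees that these points are genuinely distinct rather than being few points counted with high multiplicity; the one real subtlety is that this theorem is only available in characteristic $p$ when one of the functions involved is not a $p$-th power, so the first order of business is to arrange that.

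Concretely, I would first pass to the smooth projective model $\widetilde{\mathcal{C}}_{/F}$ of $\mathcal{C}$, of genus $g$, and recall that (as $F$ is algebraically closed) all but finitely many of its points correspond, via the birational morphism to the projective closure of $\mathcal{C}$, to points of $\mathcal{C}(F)$ with the same values of the coordinate functions $X, Y \in F(\mathcal{C}) = F(\widetilde{\mathcal{C}})$; since $\mathcal{C}$ is neither a vertical nor a horizontal line, $X$ and $Y$ are non-constant. Because $F(\widetilde{\mathcal{C}}) = F(X,Y)$, the functions $X$ and $Y$ cannot both be $p$-th powers in $F(\widetilde{\mathcal{C}})$, so after possibly interchanging the two coordinates (the conclusion of the Proposition is symmetric in them) I may assume that $X$ is not a $p$-th power. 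Next I would observe that $Y^{p^m}/X$ is non-constant for every $m \in \mathbb{Z}_{\geq 0}$ with at most one exception, since two coincidences $Y^{p^{m_1}}/X, Y^{p^{m_2}}/X \in F^{\ast}$ with $m_1 < m_2$ would force $Y^{p^{m_1}(p^{m_2-m_1}-1)} \in F^{\ast}$, hence $Y \in F^{\ast}$, as $F(\widetilde{\mathcal{C}})^{\ast}/F^{\ast}$ is torsion-free. For any $m \geq 1$ outside this exceptional value I would apply the $ABC$ theorem for function fields to
\[
    Y^{p^m} + (-X) + (X - Y^{p^m}) = 0
\]
— which is legitimate because $-X$ is not a $p$-th power — to obtain
\[
    p^m d_Y = \deg(Y^{p^m}) \leq N_m + C,
\]
where $d_Y \geq 1$ is the number of poles of $Y$ counted with multiplicity, $N_m$ is the number of \emph{distinct} zeros of $X - Y^{p^m}$ on $\widetilde{\mathcal{C}}$, and $C = 2g - 2 + (\text{number of distinct zeros and poles of } X \text{ and of } Y)$ is independent of $m$; here one uses that the poles of $X - Y^{p^m}$ lie among those of $X$ and $Y$. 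In particular $N_m \to \infty$.

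Finally I would harvest. All but a bounded, $m$-independent number of the $N_m$ distinct zeros of $X - Y^{p^m}$ avoid the (finite, $m$-independent) set consisting of the zeros and poles of $X$ and $Y$ together with the preimage under $Y$ of the finite supersingular locus; at such a zero $Q$ one has $X(Q) = Y(Q)^{p^m}$ with $Y(Q) \in F^{\ast}$, hence $X(Q) \in F^{\ast}$ has the same multiplicative order as $Y(Q)$ (because this order is prime to $p$), and $m$-fold iteration of Proposition~\ref{prop:Frobenius} shows that $E_{X(Q)}$ has complex multiplication by the same imaginary quadratic order as $E_{Y(Q)}$. Transporting these points to $\mathcal{C}(F)$ through the birational morphism, for each sufficiently large such $m$ we obtain at least $N_m - C'$ points of $\mathcal{C}(F)$ with the required property, where $C'$ does not depend on $m$; since $N_m - C' \to \infty$, the set of such points is infinite. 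I expect the main obstacle to be precisely the characteristic-$p$ validity of $ABC$ — which is why the reduction to $X$ not a $p$-th power is the crux — and, secondarily, the bookkeeping confirming that every exceptional set appearing in the argument (the poles and zeros, the supersingular locus, the non-smooth and missing points of the model, the single bad exponent) has size bounded independently of $m$.
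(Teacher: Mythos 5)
Your proposal is correct and follows essentially the same route as the paper's proof: apply the function-field $ABC$ theorem to $X - Y^{p^m}$ to produce points with $x = y^{p^m}$, then invoke Frobenius-invariance of the multiplicative order and of the CM order. The only cosmetic differences are that the paper secures the non-$p$-th-power hypothesis by factoring $X = X_0^{p^m}$ rather than by swapping coordinates, and it obtains infinitude by enlarging the exceptional set and iterating rather than by your (equally valid) counting argument $N_m - C' \to \infty$.
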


Note that, since there are only finitely many elements of $F$ with a prescribed multiplicative order and, similarly, only finitely many elements of $F$ having CM by a fixed endomorphism ring, it is necessary to exclude vertical and horizontal lines in the hypothesis of Proposition \ref{prop:infinitelymany}.

\begin{proof}
Let $\mathcal{C} \subseteq \mathbb{A}^2_F$ be an integral closed curve as in the statement of the proposition. The idea is to prove that there are infinitely many points $(x,y) \in \mathcal{C}(F)$ such that $x=y^{p^n}$ for some $n \in \mathbb{N}$. Since the Frobenius endomorphism of $F$ preserves multiplicative orders as well as the property of having CM by a fixed order $\mathcal{O}$ (see Proposition \ref{prop:Frobenius}), this suffices to conclude. An analogous argument already appears in the proof of \cite[Proposition 3.6]{Campagna_Dill} and we sketch it again here for completeness.

Let $K$ be the function field of $\mathcal{C}$ and let $X,Y \in K$ be the coordinate functions on $\mathcal{C}$. Since $X \not\in F$, there exists $X_0 \in K$ such that $X_0$ is not a $p$-th power in $K$ and $X = X_0^{p^m}$ for some $m \in \mathbb{Z}_{\geq 0}$. For $n \in \mathbb{N}$, let $P_n=X_0-Y^{p^n}$ so that $X_0-Y^{p^n}-P_n=0$. Let $\overline{\mathcal{C}}_{/F}$ be a smooth projective irreducible curve whose function field is isomorphic to $K$. There exists an open immersion $\mathcal{C}^{\circ} \hookrightarrow \overline{\mathcal{C}}$ where $\mathcal{C}^{\circ}$ denotes the smooth locus of $\mathcal{C}$. We identify $\mathcal{C}^{\circ}$ with its image in $\overline{\mathcal{C}}$. Let $S \subseteq \overline{\mathcal{C}}(F)$ be the union of the finite set $(\overline{\mathcal{C}}\setminus\mathcal{C}^{\circ})(F)$ and the set of points in $\mathcal{C}^{\circ}(F)$ where the order of $X_0$ or $Y$, regarded as rational functions on $\mathcal{C}^{\circ}$, is non-zero. Since $X_0$ is not a $p$-th power in $K$, the function $X_0/(-Y^{p^n})$ is also not a $p$-th power in $K$. Since $Y$ is non-constant, the function $X_0/(-Y^{p^n})$ is non-constant for $n$ large enough and its degree goes to infinity as $n$ goes to infinity (in particular, $P_n \neq 0$ for all sufficiently large $n$). Hence, the $ABC$ theorem for function fields \cite[Lemma 10 on page 97]{Mason} shows that, if $n$ is large enough, there exists a point $Q=(x,y) \in \overline{\mathcal{C}}(F) \setminus S \subseteq \mathcal{C}^{\circ}(F)$ such that $P_n(Q)=0$. This implies that $X_0(Q) =y^{p^n}$ and hence $x = X_0(Q)^{p^m} = y^{p^{m+n}}$. By adding the point $Q$ to the exceptional set $S$ and repeating the argument, one constructs infinitely many points with the desired property. This concludes the proof and the article.
    
\end{proof}

\section*{\textsc{Gratiarvm actio}}
We thank Philipp Habegger, Jonathan Pila, and Thomas Scanlon for helpful discussions and we thank the Mathematical Sciences Research Institute for the stimulating and inspiring environment where many ideas contained in this paper have seen the light of day. We thank GD's parents for useful consultations on Latin vocabulary.

FC is supported by ANR-20-CE40-0003 Jinvariant.

When this project began, GD was supported by the Swiss National Science Foundation through the Early Postdoc.Mobility grant no. P2BSP2\_195703. He thanks the Mathematical Institute of the University of Oxford and his host there, Jonathan Pila, for hosting him as a visitor for the duration of this grant. This material is based upon work supported by the National Science Foundation under Grant No.~DMS--1928930 while GD was in residence at the Mathematical Sciences Research Institute in Berkeley, California, during the Spring 2023 semester. GD thanks the DFG for its support (grant no. EXC-2047/1 - 390685813).

\vspace{\baselineskip}
\noindent
\framebox[\textwidth]{
\begin{tabular*}{0.96\textwidth}{@{\extracolsep{\fill} }cp{0.84\textwidth}}
\raisebox{-0.7\height}{%
    \begin{tikzpicture}[y=0.80pt, x=0.8pt, yscale=-1, inner sep=0pt, outer sep=0pt, 
    scale=0.12]
    \definecolor{c003399}{RGB}{0,51,153}
    \definecolor{cffcc00}{RGB}{255,204,0}
    \begin{scope}[shift={(0,-872.36218)}]
      \path[shift={(0,872.36218)},fill=c003399,nonzero rule] (0.0000,0.0000) rectangle (270.0000,180.0000);
      \foreach \myshift in 
           {(0,812.36218), (0,932.36218), 
    		(60.0,872.36218), (-60.0,872.36218), 
    		(30.0,820.36218), (-30.0,820.36218),
    		(30.0,924.36218), (-30.0,924.36218),
    		(-52.0,842.36218), (52.0,842.36218), 
    		(52.0,902.36218), (-52.0,902.36218)}
        \path[shift=\myshift,fill=cffcc00,nonzero rule] (135.0000,80.0000) -- (137.2453,86.9096) -- (144.5106,86.9098) -- (138.6330,91.1804) -- (140.8778,98.0902) -- (135.0000,93.8200) -- (129.1222,98.0902) -- (131.3670,91.1804) -- (125.4894,86.9098) -- (132.7547,86.9096) -- cycle;
    \end{scope}
    \end{tikzpicture}%
}
&
Francesco Campagna and Gabriel Dill have received funding from the European Research Council (ERC) under the European Union’s Horizon 2020 research and innovation programme (grant agreement n$^\circ$ 945714).
\end{tabular*}
}

\printbibliography

\end{document}